\documentclass[12pt]{amsart}

\usepackage[pdfauthor   = {Mohamed\ Barakat\ and\ Markus\ Lange-Hegermann},
            pdftitle    = {},
            pdfsubject  = {},
            pdfkeywords = {Serre\ quotients, reflective\ localizations\ of\ Abelian\ categories, Yoneda\ Ext, Ext-computability;},
            bookmarks=true,
            bookmarksopen=true,
            pagebackref=true,
            hyperindex=true,
            colorlinks=true,
            linkcolor=blue,
            citecolor=blue,
            filecolor=blue,
            urlcolor=blue,
            ]{hyperref}

\usepackage[utf8,utf8x]{inputenc}
\usepackage[english]{babel}
\usepackage[T1]{fontenc}
\usepackage{geometry}                
\geometry{
  includeheadfoot,
  margin=2.81cm
}
\usepackage{times}
\usepackage[novbox]{pdfsync}
\usepackage{mathrsfs}
\usepackage{latexsym}
\usepackage{amssymb}
\usepackage{amsthm}
\usepackage{epsfig}
\usepackage{colortbl}
\usepackage[all]{xy}
\usepackage{fancyvrb}
\usepackage{graphicx}
\usepackage[dvipsnames]{xcolor}
\usepackage{accents} 
\usepackage{enumerate}

\renewcommand\theenumi{\alph{enumi}}
\renewcommand{\labelenumi}{(\theenumi)}

\usepackage{wrapfig}
\usepackage{tikz}
\usetikzlibrary{shapes,arrows,matrix,backgrounds,positioning,plotmarks,calc,patterns,matrix,
decorations.shapes,
decorations.fractals,
decorations.markings,
decorations.pathreplacing,
decorations.pathmorphing,
decorations.text
}
\usepackage[colorinlistoftodos,shadow]{todonotes}

\usepackage{multirow}
\usepackage{mdwlist}
\usepackage{stmaryrd}
\usepackage{mathdots} 

\usepackage[toc,page]{appendix}
\usepackage{float}

\newtheoremstyle{mytheoremstyle} 
    {5pt}                    
    {5pt}                    
    {\itshape}                   
    {\parindent}                           
    {\bf}                   
    {.}                          
    {.5em}                       
    {}  

\theoremstyle{mytheoremstyle}

\newtheorem{theorem}{Theorem}[section]

\newtheorem{lemm}[theorem]{Lemma}
\newtheorem{prop}[theorem]{Proposition}

\newtheoremstyle{mytdefintionstyle} 
    {5pt}                    
    {5pt}                    
    {\rm}                   
    {\parindent}                           
    {\bf}                   
    {.}                          
    {.5em}                       
    {}  

\theoremstyle{remark}
\newtheorem{rmrk}[theorem]{Remark}

\theoremstyle{mytdefintionstyle}
\newtheorem{defn}[theorem]{Definition}
\newtheorem{exmp}[theorem]{Example}

\newtheoremstyle{exmp_contd}
    {5pt}                    
    {5pt}                    
    {\rm}                   
    {\parindent}                           
    {\bf}                   
    {.}                          
    {.5em}                       
    {\thmname{#1}\ \thmnumber{ #2}\thmnote{#3}\ (continued)}  
\theoremstyle{exmp_contd}


\newcommand\nameft\textrm

\newcommand{\QQ}{{\mathscr{Q}}}
\renewcommand{\SS}{{\mathscr{S}}}

\DeclareMathOperator{\Ext}{Ext}

\DeclareMathOperator{\coker}{coker}

\DeclareMathOperator{\img}{im}

\DeclareMathOperator{\Hom}{Hom}

\DeclareMathOperator{\Spec}{Spec}

\DeclareMathOperator{\Sat}{Sat}

\newcommand{\Coh}{\mathfrak{Coh}\,}

\renewcommand\O{\mathcal{O}}
\newcommand\PP{\mathbb{P}}

\newcommand\A{\mathcal{A}}
\newcommand\C{\mathcal{C}}

\newcommand\B{\mathcal{B}}

\renewcommand{\O}{\mathcal{O}}

\newcommand{\Z}{\mathbb{Z}}

\renewcommand\phi{\varphi}

\DeclareMathOperator\Id{Id}

\DeclareMathOperator\cores{co-res}

\definecolor{darkgray}{rgb}{0.3,0.3,0.3}

\newcommand{\FillArea}[4]{#1 -- #2 -- #3 -- #4 -- #1}

\pgfarrowsdeclarecombine[\pgflinewidth]
  {doublestealth}{doublestealth}{stealth'}{stealth'}{stealth'}{stealth'}

\definecolor{darkgreen}{rgb}{0.008,0.617,0.067}
\definecolor{brown}{rgb}{0.6,0.4,0.2}




\setlength{\marginparwidth}{2cm}

\newif\ifjournalversion

\author{Mohamed Barakat}
\address{Department of mathematics, University of Kaiserslautern, 67653 Kaiserslautern, Germany}
\email{\href{mailto:Mohamed Barakat <barakat@mathematik.uni-kl.de>}{barakat@mathematik.uni-kl.de}}

\author{Markus Lange-Hegermann}
\address{Lehrstuhl B f\"ur Mathematik, RWTH Aachen University, 52062 Aachen, Germany}
\email{\href{mailto:Markus Lange-Hegermann <markus.lange.hegermann@rwth-aachen.de>}{markus.lange.hegermann@rwth-aachen.de}}

\begin{document}

\title[On the \texorpdfstring{$\Ext$}{Ext}-computability of \nameft{Serre} quotient categories]{On the \texorpdfstring{$\Ext$}{Ext}-computability of \nameft{Serre} quotient categories}

\begin{abstract}
  To develop a constructive description of $\Ext$ in categories of coherent sheaves over certain schemes, we establish a binatural isomorphism between the $\Ext$-groups in \nameft{Serre} quotient categories $\A/\C$ and a direct limit of $\Ext$-groups in the ambient \nameft{Abel}ian category $\A$.
  For $\Ext^1$ the isomorphism follows if the thick subcategory $\C \subset \A$ is localizing.
  For the higher extension groups we need further assumptions on $\C$.
  With these categories in mind we cannot assume $\A/\C$ to have enough projectives or injectives and therefore use \nameft{Yoneda}'s description of $\Ext$.
\end{abstract}

\keywords{\nameft{Serre} quotients, reflective localizations of \nameft{Abel}ian categories, \nameft{Yoneda} $\Ext$, $\Ext$-computability}
\subjclass[2010]{
   18G15, 
   18E35, 
   18A40} 

\maketitle

\renewcommand\theenumi{\alph{enumi}}
\renewcommand{\labelenumi}{(\theenumi)}

\section{Introduction}

Our original motivation is to develop a constructive and computer-friendly description of \nameft{Abel}ian categories of coherent sheaves $\Coh X$ on various classes of \nameft{Noether}ian schemes $X$.
In this setup the functors $\Hom$ and $\Ext^c$ are ubiquitous, and any constructive approach needs to incorporate these functors.
For example, the global section functor on $\Coh X$ can be defined as $\Gamma:=\Hom(\O_X,-)$, i.e., in terms of the $\Hom$ functor and the structure sheaf $\O_X$.
The higher sheaf cohomology $H^i$ is usually defined in the nonconstructive larger category of quasi-coherent sheaves on $X$ as $H^i=R^i\Gamma=\Ext^i(\O_X,-)$.

In this paper we deal with computing the bivariate $\Ext^i(-,-)$, where for the special univariate case of sheaf cohomology $H^i=\Ext^i(\O_X,-)$ there often exist good algorithms.
Our minimal assumption on $X$ is that the category $\Coh X$ is equivalent to a \nameft{Serre} quotient category $\A/\C \simeq \Coh X$ where $\A$ is a computable category (in the sense of Appendix~\ref{sec:computability}) of finitely presented graded modules and $\C\subset\A$ is its thick subcategory of all modules with zero sheafification.
The canonical functor $\QQ:\A\to\A/\C$ then plays the role of the exact sheafification functor $\operatorname{Sh}:\A \to \Coh X, M \mapsto \widetilde{M}$.
Some classes of schemes for which this holds are listed in \cite[Section~4]{BL_SerreQuotients}, including projective and toric schemes.

The computability of $\Ext^c$ would usually follow from that of $\Hom$ in case the underlying category is computable and has constructively enough projectives or enough injectives.
However, as categories of \emph{coherent} sheaves do not in general admit enough injectives or projectives we cannot assume this for the computation of $\Ext^c$ in an abstract \nameft{Serre} quotient $\A/\C$.
Hence, $\Ext^c$ in such an $\A/\C$ cannot even be defined constructively as a derived functor using projective or injective resolutions and we are left over with \nameft{Yoneda}'s description of $\Ext^c$ \cite{Oort_Yoneda64}.
Although \nameft{Yoneda}'s description does not a priori provide an algorithm to compute $\Ext^c$, it is sufficient to prove our main result: Under certain assumptions on $\C$ the computability of $\Ext^c$ in $\A/\C$ can be reduced to the computability of $\Ext^c$ in $\A$, provided a certain (infinite) direct limit is constructive.
More precisely:

\begin{theorem} \label{thm:main}
  If $\C$ is an almost split localizing\footnote{Cf.~Definition~\ref{defn:almost_split}.} subcategory of an \nameft{Abel}ian category $\A$ then the binatural transformation\footnote{We drop the canonical functor $\QQ$ in $\Ext^c_{\A/\C}(\QQ(M),\QQ(N))$ since $\QQ$ is the identity on objects.}
  \[
    \QQ^{\Ext}: \varinjlim_{\substack{ M' \leq M,\\ M/M' \in \C}} \Ext^c_\A(M',N) \to \Ext^c_{\A/\C}(M,N)
  \]
  is an isomorphism (of \nameft{Abel}ian groups) for all $\C$-torsion-free $M \in \A$ and $\C$-saturated $N \in \A$.
\end{theorem}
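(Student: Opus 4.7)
The plan is to treat both sides via Yoneda $c$-fold representatives and build an inverse of $\QQ^{\Ext}$ using the section functor $\bS:\A/\C\to\A$ right-adjoint to $\QQ$, which exists because $\C$ is localizing. I will rely on the standard facts that $\QQ$ is exact, $\bS$ is fully faithful and left exact, $\QQ\bS\cong\id_{\A/\C}$, and that for every $X\in\A$ the unit $\eta_X:X\to\bS\QQ X$ has kernel and cokernel in $\C$—it is a monomorphism exactly when $X$ is $\C$-torsion-free and an isomorphism exactly when $X$ is $\C$-saturated. Under our hypotheses this gives $\bS\QQ N\cong N$, while $\eta_M:M\hookrightarrow\bS\QQ M$ has cokernel in $\C$.

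The map $\QQ^{\Ext}$ itself is defined on a representative $\xi:0\to N\to E_1\to\cdots\to E_c\to M'\to 0$ with $M/M'\in\C$ by applying the exact functor $\QQ$ and composing with the isomorphism $\QQ M'\cong\QQ M$; compatibility with refinements $M''\leq M'$ is automatic, so the construction descends to the direct limit. For surjectivity in the case $c=1$, given $\zeta:0\to N\to F\to\QQ M\to 0$ in $\A/\C$, apply $\bS$ to obtain the exact sequence $0\to N\to\bS F\to\bS\QQ M$ in $\A$; the cokernel of the rightmost arrow is sent by $\QQ$ to $\coker(F\to\QQ M)=0$, so it lies in $\C$. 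Forming the pullback $E:=\bS F\times_{\bS\QQ M}M$ along $\eta_M$ then yields a short exact sequence $0\to N\to E\to M'\to 0$ in $\A$ with $M':=\img(E\to M)$ satisfying $M/M'\in\C$, and the identity $\QQ\bS\cong\id$ together with exactness of $\QQ$ gives $\QQ E\cong F$. Higher $c$ will be handled by induction: split $\zeta$ at its rightmost short exact sequence, lift it by the construction just described, and apply the inductive hypothesis to the remaining $(c-1)$-extension ending at the lifted kernel. The almost-split-localizing hypothesis on $\C$ is what I expect to guarantee that the intermediate kernels produced along the way are $\C$-torsion-free, keeping the pullback construction available at each step.

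For injectivity, a Yoneda equivalence $\QQ(\xi)\sim 0$ in $\A/\C$ is realised by a zig-zag of morphisms of $c$-extensions fixing $N$ and $M$; applying $\bS$, pulling back along $\eta_M$, and possibly restricting $M'$ to a smaller $M''\leq M'$ with $M/M''\in\C$ should lift this zig-zag into $\A$ and witness the vanishing of the restricted class in $\Ext^c_\A(M'',N)$, hence in the direct limit. The principal obstacle is the inductive step in the higher-degree direction: at each stage the pullback construction produces intermediate objects whose $\C$-torsion and $\C$-saturation behaviour is not automatic, and it is precisely at this point that the almost-split-localizing hypothesis on $\C$ is expected to play its role—either by furnishing suitable $\C$-torsion-free refinements of the intermediate kernels, or by allowing one to absorb unwanted $\C$-torsion through further shrinking of $M'$ within the filtered family $\{M'\leq M\mid M/M'\in\C\}$.
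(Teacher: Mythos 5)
Your $c=1$ argument is sound and is essentially the paper's Theorem~\ref{thm:ext1} rewritten in adjoint language; there only the localizing hypothesis is needed. For $c\geq 2$, however, there is a genuine gap: the almost split hypothesis is never actually used --- you only say you ``expect'' it to keep intermediate kernels $\C$-torsion-free --- and the argument breaks exactly where that hypothesis must do work. Concretely, in your surjectivity induction the lift of the rightmost short exact sequence (apply $\SS$, pull back along $\eta_M$) has kernel $\SS$ applied to the kernel in $\A/\C$, which is automatically $\C$-saturated, so torsion-freeness is not the issue. The issue is that the inductive hypothesis applied to the remaining $(c-1)$-extension only produces a lift ending at some subobject $K''$ of that saturated kernel with quotient in $\C$, not at the kernel itself; splicing the two pieces therefore gives a complex that is exact only up to a $\C$-defect at the junction, and shrinking the top object $M'$ (the only shrinking the colimit grants for free) does not repair exactness in the middle of the complex. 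The missing ingredient is precisely the paper's Lemma~\ref{lemm:Cfree_exact}: a $\C$-torsion-free complex which is exact up to $\C$-defects contains an exact subcomplex equal to it up to $\C$-factors, built by iteratively taking preimages of almost $\C$-complements in the successive quotients. This is the only place where ``almost split'' enters, and Example~\ref{exmp:non_almost_split} shows the theorem fails without it, so any correct proof must invoke it substantively rather than as a hoped-for safeguard. (The paper's surjectivity proof in fact avoids your induction altogether: it views the whole saturated extension as an $\A$-complex via the left exact inclusion $\iota:\Sat_\C(\A)\hookrightarrow\A$ and applies Lemma~\ref{lemm:Cfree_exact} once.)

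Your injectivity paragraph has the same character: ``applying $\SS$, pulling back along $\eta_M$, and possibly restricting $M'$ should lift this zig-zag'' restates the goal rather than proving it. Since $\SS$ is only left exact, the image of a zig-zag of exact extensions in $\A/\C$ consists of complexes exact only up to $\C$-defects, and re-exactifying all nodes compatibly with the connecting maps is exactly the hard point. The paper's injectivity argument does this work explicitly and differently: it first proves (Proposition~\ref{prop:Cfree}, by induction using Lemma~\ref{lemm:Cfree} and Lemma~\ref{lemm:Cfree_exact}) that every exact $\A$-complex with $\C$-torsion-free ends contains an exact $\C$-torsion-free subcomplex equal to it up to $\C$-factors, checks that such a subcomplex represents the same class in the colimit, and then, for two representatives with the same image, embeds their torsion-free exact subcomplexes into $\iota$ of the saturated extension, intersects them, and applies Lemma~\ref{lemm:Cfree_exact} once more. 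To complete your proposal you would need to formulate and prove statements of this type; as written, the decisive steps for all $c\geq 2$ are asserted as expectations rather than established.
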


For applications to coherent sheaves $\A/\C \simeq \Coh X$ (for $X$ as above) we need to prove that the thick subcategory $\C$ of modules with zero sheafification is almost split localizing.
This is for example the case if $X$ is a projective space and $\A$ is suitably chosen (see Section~\ref{sec:Hom} and Example~\ref{exmp:almost_split}).
However, it is worth mentioning that Theorem~\ref{thm:main} cannot cover\footnote{contrary to Theorem~\ref{thm:ext1}.} the case of coherent sheaves on nonsmooth toric varieties.
One can see this easily since the \nameft{Cox} ring and hence the category $\A$ of finitely presented graded modules over this ring is of finite global dimension while one can easily construct coherent sheaves on a nonsmooth toric variety with non-vanishing $\Ext^c$ for arbitrarily high $c$ (see Example~\ref{exmp:non_almost_split}).

The theorem suggests an algorithmic approach to the computability of $\Ext^c$ in $\A/\C$.
To compute the left hand side $\varinjlim_{\substack{ M' \leq M,\\ M/M' \in \C}} \Ext^c_\A(M',N)$ we need to be able to compute $\Ext^c_\A$ and a direct limit of \nameft{Abel}ian groups.
For categories of graded modules $\A$ there are well-known algorithms to compute $\Ext^c_\A$.
Proving that the (infinite) direct limit can be computed in finite terms depends on $\A$ and $\C$.
For example, in the category $\A/\C \simeq \Coh X$ of coherent sheaves on a projective space the finiteness of this direct limit follows from the \nameft{Castelnuovo-Mumford} regularity.
Thus, Theorem~\ref{thm:main} is an abstract form of \cite[Theorem 1]{SmithExt} and \cite{SmithExtOW}, without the context-specific convergence analysis.
If $\A$ is the category of graded modules and if the limit is reached for a certain $M' \leq M$ then one can use a graded free resolution of $M'$ in $\A$ to compute $\Ext^c_{\A/\C}(M,N)$.
In this case this graded free resolution of $M'$ in $\A$ corresponds, under the canonical functor $\QQ:\A \to \A/\C$, to a locally free resolution of $M$ in $\A/\C = \Coh X$ satisfying some regularity bounds.
We believe that the multigraded \nameft{Castelnuovo-Mumford} \cite{MS04,MS05,HSS} can be used to prove the finiteness of the limit in the case of smooth projective toric varieties.
We leave this for future work.

We briefly recall the language of \nameft{Serre} quotient categories in Section~\ref{sec:Serre} and deal with the $c=0$ case of Theorem~\ref{thm:main} in Section~\ref{sec:Hom}.
In Section~\ref{sec:Yoneda} we recall \nameft{Yoneda}'s description of $\Ext^c$ and in Section~\ref{sec:QExt} we define the binatural transformation $\QQ^{\Ext}$.
In the main Section~\ref{sec:proof} we define the above mentioned condition which $\C$ needs to satisfy and prove Theorem~\ref{thm:main}.
There it is also proved that the theorem is valid if $c=1$ under the weaker condition that $\C$ is a localizing subcategory of $\A$ (cf.~Theorem~\ref{thm:ext1}).
Finally, in Appendix~\ref{sec:computability} we briefly sketch a constructive context for this paper.

\section{Preliminaries on \nameft{Serre} quotients}\label{section:preliminaries_sere_quotients} \label{sec:Serre}

In this section we recall some results about \nameft{Serre} quotients \cite{Gab_thesis}.
From now on $\A$ is an \nameft{Abel}ian category.

A non-empty full subcategory $\mathcal{C}$ of an \nameft{Abel}ian category $\A$ is called \textbf{thick} if it is closed under passing to subobjects, factor objects, and extensions.
In this case the \textbf{(\nameft{Serre}) quotient category} $\A/\C$ is a category with the same objects as $\A$ and $\Hom$-groups
\[
  \Hom_{\A/\C}(M,N) := \varinjlim_{\substack{M' \le M, N' \le N,\\ M/M', N' \in \C}} \Hom_\A(M',N/N')\mbox{.}
\]
The \textbf{canonical functor} $\QQ:\A \to \A/\C$ is defined to be the identity on objects and maps a morphism $\phi \in \Hom_\A(M,N)$ to its class in the direct limit $\Hom_{\A/\C}(M,N)$.
The category $\A/\C$ is \nameft{Abel}ian and the canonical functor $\QQ: \A \to \A/\C$ is exact.

Let $\C\subset\A$ be thick.
An object $M \in \A$ is called \textbf{$\C$-torsion-free} if $M$ has no nonzero subobjects in $\C$.
We will need the following simple lemma.
\begin{lemm} \label{lemm:extension_of_C_free_is_C_free}
  An extension of $\C$-torsion-free objects is again $\C$-torsion-free.
\end{lemm}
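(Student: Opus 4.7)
The plan is to take the short exact sequence realizing the extension and chase any hypothetical $\C$-subobject through it, using the closure properties of the thick subcategory $\C$ to force it to vanish.

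Concretely, suppose we are given a short exact sequence
\[
  0 \to M' \to M \to M'' \to 0
\]
in $\A$ with $M'$ and $M''$ both $\C$-torsion-free, and let $T \hookrightarrow M$ be a subobject with $T \in \C$. The goal is to show $T = 0$. First I would form the intersection $T \cap M'$ inside $M$ (i.e., the pullback of $T \hookrightarrow M \hookleftarrow M'$). This is a subobject of $T$, hence lies in $\C$ because $\C$ is closed under subobjects. It is also a subobject of $M'$, and since $M'$ is $\C$-torsion-free we conclude $T \cap M' = 0$.

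Next, because $T \cap M' = 0$, the composite $T \hookrightarrow M \twoheadrightarrow M''$ is a monomorphism, so $T$ embeds into $M''$ as a subobject. Since $T \in \C$, the image of $T$ in $M''$ is in $\C$ as well (either viewed as a quotient of $T$, using closure under factor objects, or simply as a subobject isomorphic to $T$). But $M''$ is $\C$-torsion-free, which forces this image to be zero, and hence $T = 0$.

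I do not anticipate any serious obstacle: the argument is a direct application of the defining closure properties of a thick subcategory (closure under subobjects and factor objects), together with the definition of $\C$-torsion-freeness applied to both ends of the sequence. The only point that deserves care is making sure the intersection $T \cap M'$ is genuinely the kernel of $T \to M''$, so that both the ``subobject of $M'$'' step and the ``monomorphism into $M''$'' step land on the correct side of the short exact sequence; this is automatic from the universal property of the pullback in an \nameft{Abel}ian category.
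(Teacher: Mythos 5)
Your proof is correct and follows essentially the same argument as the paper: intersect the hypothetical $\C$-subobject with the $\C$-torsion-free subobject to see the intersection vanishes, then embed it into the $\C$-torsion-free factor object to conclude it is zero. The only difference is presentational (direct argument versus the paper's contradiction phrasing), and your explicit justification that $T \cap M' = 0$ is a welcome detail the paper leaves implicit.
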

\begin{proof}
  Let $E$ be an object in $\A$ with $\C$-torsion-free subobject $L$ and $\C$-torsion-free factor object $B=E/L$.
  Assume that $E$ has a nontrivial $\C$-subobject $T$.
  Since $L \cap T = 0$ we conclude that $T$ is isomorphic to the nontrivial $\C$-subobject $(T+L) / L$ of $B$, a contradiction.
\end{proof}
If every object $M \in \A$ has a \textbf{maximal $\C$-subobject} $H_\C(M)$ then we call $\C$ a \textbf{thick torsion} subcategory.
An object $M \in \A$ is called \textbf{$\C$-saturated} if it is $\C$-torsion-free and every extension of $M$ by an object $C \in \C$ is trivial.
Denote by $\Sat_\C(\A)\subset\A$ the full subcategory of $\C$-saturated objects with embedding functor $\iota: \Sat_\C(\A) \hookrightarrow \A$.
The thick subcategory $\C\subset\A$ is called a \textbf{localizing} subcategory if the canonical functor $\QQ: \A \to \A/\C$ admits a right adjoint $\SS:\A/\C \to \A$, called the \textbf{section functor} of $\QQ$.
The section functor $\SS:\A/\C \to \A$ is left exact and preserves products, the counit of the adjunction $\delta: \QQ \circ \SS \xrightarrow{\sim} \Id_{\A/\C}$ is a natural isomorphism.
Let $\eta: \Id_{\A} \to \SS \circ \QQ$ denote the unit of the adjunction.
The kernel $\ker\left( \eta_M:M \to (\SS \circ \QQ)(M) \right)$ is then the maximal $\C$-subobject $H_\C(M)$ of $M$.
The cokernel of $\eta_M$ lies in $\C$.
We call $(\SS \circ \QQ)(M)$ the \textbf{$\C$-saturation of $M$}.
An object $M$ in $\A$ is \textbf{$\C$-saturated} if and only if $\eta_M$ is an isomorphism.
The image $\SS(\A/\C)$ of $\SS$ is a subcategory of $\Sat_\C(\A)$ and the inclusion functor $\SS(\A/\C)\hookrightarrow \Sat_\C(\A)$ is an equivalence of categories with the restricted-corestricted monad $\SS\circ \QQ:\Sat_\C(\A) \to \SS(\A/\C)$ as a quasi-inverse.
The restricted canonical functor $\QQ:\Sat_\C(\A) \to \A/\C$ and the corestricted section functor $\SS:\A/\C \to \Sat_\C(\A)$ are quasi-inverse equivalences of categories.
In particular, $\Sat_\C(\A) \simeq \SS(\A/\C) \simeq \A/\C$ is an \nameft{Abel}ian category.
Define the reflector\footnote{A functor is called a reflector if it has a fully faithful right adjoint.} $\widehat{\QQ} := \cores_{\Sat_\C(\A)} (\SS \circ \QQ): \A \to \Sat_\C(\A)$.
The adjunction $\widehat{\QQ} \dashv (\iota: \Sat_\C(\A) \hookrightarrow \A)$ corresponds under the above equivalence to the adjunction $\QQ \dashv (\SS: \A/\C \to \A)$.
They both share the same adjunction monad $\SS \circ \QQ = \iota \circ \widehat{\QQ}: \A \to \A$.
In particular, the reflector $\widehat{\QQ}$ is exact and $\iota$ is left exact.
$\SS(\A/\C) \simeq \Sat_\C(\A)$ are not in general \nameft{Abel}ian \emph{sub}categories of $\A$, as short exact sequences in $\Sat_\C(\A)$ are not necessarily exact in $\A$.
For more details and for a characterization of the monad $\SS \circ \QQ$ see \cite{BL_Monads}.

\section{The \texorpdfstring{$c=0$}{c=0} case} \label{sec:Hom}

If the thick subcategory $\C \subset \A$ is torsion then the double direct limit in the definition of the $\Hom$-groups in $\A/\C$ simplifies to a single direct limit
\[
  \Hom_{\A/\C}(M,N) = \varinjlim_{\substack{M' \le M \\ M / M' \in \C}} \Hom_\A(M',N/H_\C(N)) \mbox{.}
\]
If furthermore $\C \subset \A$ is localizing then the $\Hom$-adjunction\footnote{Again, we drop the canonical functor $\QQ$ in $\Hom_{\A/\C}(\QQ(M),\QQ(N))$ since $\QQ$ is the identity on objects.} between $\QQ$ and $\SS$ yields
\begin{equation} \label{eq:Hom} \tag{Hom}
  \Hom_{\A}(M,(\SS\circ\QQ)(N)) \cong \Hom_{\A/\C}(M,N) \mbox{,}
\end{equation}
for all $M,N\in\A$, avoiding the direct limit completely.
Theorem~\ref{thm:main} generalizes this last formula, being the $c=0$ case.

The monad $\SS \circ \QQ$ together with its unit are constructive in the case $\A/\C \simeq \Coh \PP^n_k$, i.e., of coherent sheaves  on the projective space $X=\PP^n_k$ over a field $k$.
Hence, the above mentioned $\Hom$-adjunction can be used to compute (global) $\Hom$-groups.
More precisely, let $\A$ be the category of finitely presented $\Z$-graded $k[x_0,\ldots,x_n]$-modules generated in degree $\ge 0$ and $\C$ be the thick subcategory of finite length modules.
The $\C$-saturation of an $N \in \A$ is the truncated module of twisted global sections, i.e., $(\SS\circ\QQ)(N) = \bigoplus_{i\ge0} \Gamma(\widetilde{N}(i))$, where $\widetilde{N}\in\Coh \PP^n_k$ is the sheafifications of $N$.
For $X=\PP^n_k$, and hence for any projective scheme, there are already several algorithms to compute the monad $\SS \circ \QQ$; e.g., as an ideal transform \cite[Theorem~20.3.15]{BrSh}, or by the \nameft{Beilinson} monad \cite{Bei78,EFS,DE}, or by the BGG-correspondence.

Recently, \nameft{Perling} \cite{PerLift} described the section functor $\SS$ and hence the monad $\SS \circ \QQ$ for a larger class of schemes, but in a (yet) nonconstructive way.
A constructive description is highly desirable as it would widen the applicability of Theorem~\ref{thm:main} as an algorithm to compute $\Ext$'s for further classes of smooth schemes.

\section{\nameft{Yoneda}'s description of \texorpdfstring{$\Ext$}{Ext}} \label{sec:Yoneda}

Since in applications to coherent sheaves the quotient category $\A/\C$ does not have enough projectives or injectives we use \nameft{Yoneda}'s description of $\Ext^c$ (cf.~\cite[Section~III.5]{ML95}).

So let $\B$ be an \nameft{Abel}ian category.
A $c$-cocycle in the $\Ext^c_\B(M,N)$ group ($c>0$) is an equivalence class of $c$-extensions of $M$ by $N$, i.e., exact $\B$-sequences
  \[
    e:\  0 \xleftarrow{} M \xleftarrow{} G_c \xleftarrow{} \cdots \xleftarrow{} G_1 \xleftarrow{} N \xleftarrow{} 0
  \]
of length $c+2$.
Two $c$-extensions $e,e'$ of $M$ by $N$ are in directed relation if there exists a chain morphism of the form
\begin{center}
\begin{tikzpicture}
  \matrix (m) [matrix of math nodes,row sep=1em,column sep=0.9em]
  { 
    0 & M & G_c & \cdots & G_1 & N & 0 \\
    0 & M & G_c' & \cdots & G_1' & N & 0 \\
  };
  \path[-stealth']
  (m-1-7) edge (m-1-6)
  (m-1-6) edge (m-1-5)
  (m-1-5) edge (m-1-4)
  (m-1-4) edge (m-1-3)
  (m-1-3) edge (m-1-2)
  (m-1-2) edge (m-1-1)
  (m-2-7) edge (m-2-6)
  (m-2-6) edge (m-2-5)
  (m-2-5) edge (m-2-4)
  (m-2-4) edge (m-2-3)
  (m-2-3) edge (m-2-2)
  (m-2-2) edge (m-2-1)
  (m-1-2) edge [double distance=2pt,-] (m-2-2)
  (m-1-3) edge (m-2-3)
  (m-1-5) edge (m-2-5)
  (m-1-6) edge [double distance=2pt,-] (m-2-6)
  ;
  \node at ($(m-1-1)+(-0.6,0)$) {$e:$};
  \node at ($(m-2-1)+(-0.6,0)$) {$e':$};
\end{tikzpicture}
\end{center}
For $c>1$ this directed relation is not symmetric.
A $c$-cocycle is an equivalence class of the equivalence relation generated by this directed relation.
Abusing the notation we will denote by $e$ the $c$-cocycle in $\Ext_\B^c(M,N)$ of a $c$-extension $e$ of $M$ by $N$.

We now recall the definition of the \nameft{Yoneda} composition $\Ext_\B^c(M,N) \times \Ext_\B^{c'}(N,L) \to \Ext_\B^{c+c'}(M,L)$.
We start with the case $c,c' >0$.
For $e^M_N \in \Ext_\B^c(M,N)$ and $e^N_L \in \Ext_\B^{c'}(N,L)$ represented by the extensions
\[
  e^M_N:\  0 \xleftarrow{} M \xleftarrow{} G_c \xleftarrow{} \cdots \xleftarrow{} G_1 \xleftarrow{} N \xleftarrow{} 0
  \mbox{ and }
  e^N_L:\  0 \xleftarrow{} N \xleftarrow{} G_{c'}' \xleftarrow{} \cdots \xleftarrow{} G_1' \xleftarrow{} L \xleftarrow{} 0
  \mbox{,}
\]
respectively.
The \nameft{Yoneda} composite $e^M_L = e^M_N e^N_L \in \Ext_\B^{c+c'}(M,L)$ is the $(c+c')$-cocycle represented by the $(c+c')$-extension
\[
    e^M_L:\  0 \xleftarrow{} M \xleftarrow{} G_c \xleftarrow{} \cdots \xleftarrow{} G_1 \xleftarrow{} G_{c'}' \xleftarrow{} \cdots \xleftarrow{} G_1' \xleftarrow{} L \xleftarrow{} 0
\]  
of $M$ by $L$, where the morphism $G_1 \xleftarrow{} G_{c'}'$ is the composition $G_1 \xleftarrow{} N \xleftarrow{} G_{c'}'$.

For $c=0$ and $c'>0$ let $\phi^M_N \in \Hom_\B(M,N)$ and $e^N_L \in \Ext_\B^{c'}(N,L)$ as above.
The \nameft{Yoneda} composite $\phi^M_N e^N_L \in \Ext_\B^{c'}(M,L)$ is given by the pullback $c'$-extension
\begin{center}
\begin{tikzpicture}
  \matrix (m) [matrix of math nodes,row sep=1.5em,column sep=0.9em]
  { 
    0 & N & G_{c'}' & G_{c'-1}' & \cdots & G_1' & L & 0 \\
    0 & M & G_{c'} & G_{c'-1}' & \cdots & G_1' & L & 0 \\
  };
  \path[-stealth']
  (m-1-8) edge (m-1-7)
  (m-1-7) edge (m-1-6)
  (m-1-6) edge (m-1-5)
  (m-1-5) edge (m-1-4)
  (m-1-4) edge (m-1-3)
  (m-1-3) edge (m-1-2)
  (m-1-2) edge (m-1-1)
  (m-2-8) edge (m-2-7)
  (m-2-7) edge (m-2-6)
  (m-2-6) edge (m-2-5)
  (m-2-5) edge (m-2-4)
  (m-2-4) edge (m-2-3)
  (m-2-3) edge (m-2-2)
  (m-2-2) edge (m-2-1)
  (m-2-2) edge node [left] {$\phi^M_N$} (m-1-2)
  (m-2-3) edge (m-1-3)
  (m-2-4) edge [double distance=2pt,-] (m-1-4)
  (m-2-6) edge [double distance=2pt,-] (m-1-6)
  (m-2-7) edge [double distance=2pt,-] (m-1-7)
  ;
  \node at ($(m-1-1)+(-0.6,0)$) {$e^N_L:$};
  \node at ($(m-2-1)+(-0.9,0)$) {$\phi^M_N e^N_L:$};
\end{tikzpicture}
\end{center}

For $c>0$ and $c'=0$ let $e^M_N \in \Ext_\B^c(M,N)$ as above and $\psi^N_L \in \Hom_\B(N,L)$.
The \nameft{Yoneda} composite $e^M_N \psi^N_L \in \Ext_\B^c(M,L)$ is given by the pushout $c$-extension
\begin{center}
\begin{tikzpicture}
  \matrix (m) [matrix of math nodes,row sep=1.5em,column sep=0.9em]
  { 
    0 & M & G_c & \cdots & G_2 & G_1 & N & 0 \\
    0 & M & G_c & \cdots & G_2 & G_1' & L & 0 \\
  };
  \path[-stealth']
  (m-1-8) edge (m-1-7)
  (m-1-7) edge (m-1-6)
  (m-1-6) edge (m-1-5)
  (m-1-5) edge (m-1-4)
  (m-1-4) edge (m-1-3)
  (m-1-3) edge (m-1-2)
  (m-1-2) edge (m-1-1)
  (m-2-8) edge (m-2-7)
  (m-2-7) edge (m-2-6)
  (m-2-6) edge (m-2-5)
  (m-2-5) edge (m-2-4)
  (m-2-4) edge (m-2-3)
  (m-2-3) edge (m-2-2)
  (m-2-2) edge (m-2-1)
  (m-1-2) edge [double distance=2pt,-] (m-2-2)
  (m-1-3) edge [double distance=2pt,-] (m-2-3)
  (m-1-5) edge [double distance=2pt,-] (m-2-5)
  (m-1-6) edge (m-2-6)
  (m-1-7) edge node [right] {$\psi^N_L$} (m-2-7)
  ;
  \node at ($(m-1-1)+(-0.6,0)$) {$e^M_N:$};
  \node at ($(m-2-1)+(-0.9,0)$) {$e^M_N \psi^N_L:$};
\end{tikzpicture}
\end{center}

For more details see, e.g., \cite[Section~IV.9]{HS}, \cite[Appendix~B]{BB}.

\section{The binatural transformation} \label{sec:QExt}

Let $\A$ is an \nameft{Abel}ian category and $\C \subset \A$ a thick subcategory.
Applying the exact canonical functor $\QQ:\A \to \A/\C$ to a cocycle $e \in \Ext^c_\A(M,N)$ we obtain a cocycle $\QQ(e)$ in\footnote{We write $\Ext^c_{\A/\C}(M,N)$ for $\Ext^c_{\A/\C}(\QQ(M),\QQ(N))$ since $\QQ$ is the identity on objects.} $\Ext^c_{\A/\C}(M,N)$.
  In other words, the canonical functor $\QQ:\A \to \A/\C$ induces maps\footnote{By this we mean the composition $\Ext^c_\A(M',N/N') \to \Ext^c_{\A/\C}(M',N/N') \xrightarrow{\cong} \Ext^c_{\A/\C}(M,N)$, where the last isomorphism is the inverse of the one induced by the $\A$-mono $M' \hookrightarrow M$ and the $\A$-epi $N \twoheadrightarrow N/N'$, as both become isomorphisms in $\A/\C$.}
  \[
    \Ext^c_\A(M',N/N') \to \Ext^c_{\A/\C}(M,N)
  \]
  for all $M,N \in \A$, $M' \leq M$, $N' \leq N$ with $ M/M' \in \C$ and $N' \in \C$.
  For $M'' \leq M'$ with $M'/M'' \in \C$ and $N'' \geq N'$ with $N''/N' \in \C$ the cocycle
  \[
    e':\  0 \xleftarrow{} M' \xleftarrow{} G_c' \xleftarrow{} G_{c-1}' \xleftarrow{} \cdots \xleftarrow{} G_2'  \xleftarrow{} G_1' \xleftarrow{} N/N' \xleftarrow{} 0 \in \Ext^c_\A(M',N/N')
  \]
  induces a cocycle
  \[
    e'':\  0 \xleftarrow{} M'' \xleftarrow{} G_c'' \xleftarrow{} G_{c-1}' \xleftarrow{} \cdots \xleftarrow{} G_2' \xleftarrow{} G_1 '' \xleftarrow{} N/N'' \xleftarrow{} 0 \in \Ext^c_\A(M'',N/N''),
  \]
  as the \nameft{Yoneda} composite $e'' = (M'' \hookrightarrow M') e' (N/N' \twoheadrightarrow N/N'')$.
  Hence, $\QQ$ induces a map
  \[
    \QQ^{\Ext}: \varinjlim_{\substack{ M' \leq M, N' \leq N,\\ M/M' \in \C, N' \in \C }} \Ext^c_\A(M',N/N') \to \Ext^c_{\A/\C}(M,N)
  \]
  for all $M,N \in \A$.
  
  If $N \in \A$ is $\C$-saturated, then the above double limit simplifies to
  \[
    \QQ^{\Ext}: \varinjlim_{\substack{ M' \leq M,\\ M/M' \in \C}} \Ext^c_\A(M',N) \to \Ext^c_{\A/\C}(M,N)
  \]
  for all $M\in \A$, as there are no nontrivial $\C$-subobjects $N' \leq N$.

  Now we consider the functoriality of the left hand side
  \[
    F: (M,N) \mapsto \varinjlim_{\substack{ M' \leq M,\\ M/M' \in \C}} \Ext^c_\A(M',N).
  \]
  To describe the functoriality in the first argument let $\phi: M \to L$ be an $\A$-morphism, $N \in \A$ ($\C$-saturated), $G^L = G^L_N \in \Ext_\A^c(L,N)$, and $G^M = G^M_N = \phi G^L \in \Ext_\A^c(M,N)$, the \nameft{Yoneda} composition of $\phi$ and $G^L$ (by construction we have that $G^M_{c'} = G^L_{c'}$ for $c' \leq c-1$).
  Taking the pullback of a subobject $\iota_{L'}: L' \hookrightarrow L$ with $L/L' \in \C$ we obtain a subobject $\iota_M': M' \hookrightarrow M$ with $M/M' \in \C$.
  Sending the cocycle $\iota_{L'} G^L \in \Ext_\A^c(L',N)$ to $\iota_{M'} G^M \in \Ext_\A^c(M',L)$ defines the first argument action of $F$ on $\phi$.
  The proof of functoriality in the first argument follows from the identity $\iota_{M'} G^M = \iota_{M'} \phi G^L = \phi_{|M'} \iota_{L'} G^L = \phi_{|M'} G^{L'}$.
\begin{center}
\begin{tikzpicture}[=<latex,arrow/.style={preaction={draw,line width=1.2mm,white}}]
  \coordinate (right) at (2,0);
  \coordinate (up) at (0,2);
  \coordinate (behind) at (1.1,1.35);
  
  \node (M) {$M$};
  \node (L) at ($(M)+(up)$) {$L$};
  \node (M1) at ($(M)+(behind)$) {$M'$};
  \node (L1) at ($(L)+(behind)$) {$L'$};
  
  \node (GcM) at ($(M)+(right)$) {$G_c^M$};
  \node (GcL) at ($(L)+(right)$) {$G_c^L$};
  \node (GcM1) at ($(M1)+(right)$) {$G_c^{M'}$};
  \node (GcL1) at ($(L1)+(right)$) {$G_c^{L'}$};
  
  \node (Gc1M) at ($(GcM)+(right)$) {$G_{c-1}^M$};
  \node (Gc1L) at ($(GcL)+(right)$) {$G_{c-1}^L$};
  
  \node (Gc2M) at ($(Gc1M)+(right)$) {$G_{c-2}^M$};
  \node (Gc2L) at ($(Gc1L)+(right)$) {$G_{c-2}^L$};
  
  \draw[arrow,-stealth'] (Gc1M) -- (GcM1);
  \draw[arrow,-stealth'] (GcM1) -- (M1);
  
  \draw[arrow,-stealth'] (L) -- ++($-0.8*(right)$) node[left]{$0$};
  \draw[arrow,-stealth'] (M) -- ++($-0.8*(right)$) node[left]{$0$};
  \draw[arrow,-stealth'] (L1) -- ++($-0.8*(right)$) node[left]{$0$};
  \draw[arrow,-stealth'] (M1) -- ++($-0.8*(right)$) node[left]{$0$};
  
  \draw[arrow,left hook-stealth'] (M1) -- node[below right]{\scriptsize $\iota_{M'}$} (M);
  \draw[arrow,left hook-stealth'] (L1) -- node[above left]{\scriptsize $\iota_{L'}$} (L);
  \draw[arrow,stealth'-] (L) -- node[left]{\scriptsize $\phi$} (M);
  \draw[arrow,stealth'-] (L1) -- node[right,near start]{\scriptsize $\phi_{|M'}$} (M1);
  
  \draw[arrow,left hook-stealth'] (GcM1) -- (GcM);
  \draw[arrow,left hook-stealth'] (GcL1) -- (GcL);
  \draw[arrow,stealth'-] (GcL) -- (GcM);
  \draw[arrow,stealth'-] (GcL1) -- (GcM1);
  
  \draw[double distance=2pt,thick] (Gc1L) -- (Gc1M);
  \draw[double distance=2pt,thick] (Gc2L) -- (Gc2M);
  
  
  \draw[arrow,-stealth'] (Gc1L) -- (GcL1);
  \draw[arrow,-stealth'] (GcL1) -- (L1);
  \draw[arrow,-stealth'] (Gc2L) -- (Gc1L);
  \draw[arrow,-stealth'] (Gc1L) -- (GcL);
  \draw[arrow,-stealth'] (GcL) -- (L);
  \draw[arrow,-stealth'] (Gc2M) -- (Gc1M);
  \draw[arrow,-stealth'] (Gc1M) -- (GcM);
  \draw[arrow,-stealth'] (GcM) -- (M);
\end{tikzpicture}
\end{center}
  For the functoriality in the second argument consider a morphism $\psi: N \to L$ and take the colimit $\varinjlim_{\substack{ M' \leq M,\\ M/M' \in \C}}$ of the maps $\Ext^c_\A(M',N) \xrightarrow{\Ext^c_\A(M',\psi)} \Ext^c_\A(M',L)$ given by the usual functoriality of $\Ext^c_\A$ in the second argument.
  Finally, the exact functor $\QQ$ commutes with pullbacks, implying the binaturality of $\QQ^{\Ext}$.

  As $\QQ$ is exact, the map $\QQ^{\Ext}$ respects \nameft{Baer} sums.
  
\section{The proof} \label{sec:proof}

Our goal is to give sufficient conditions for the binatural transformation $\QQ^{\Ext}$ to be an isomorphism.
For this we assume that $\C\subset\A$ is a localizing subcategory of the \nameft{Abel}ian category $\A$.
Then the restricted canonical functor $\QQ:\Sat_\C(\A) \to \A/\C$ and the corestricted section functor $\SS:\A/\C \to \Sat_\C(\A)$ are adjoint equivalences of categories.

\begin{rmrk} \label{rmrk:saturated_form}
We will use this equivalence to replace $\Ext_{\A/\C}$ by the isomorphic $\Ext_{\Sat_\C(\A)}$, the functor $\QQ: \A \to \A/\C$ by $\widehat{\QQ}:=\cores_{\Sat_\C(\A)} (\SS \circ \QQ): \A \to \Sat_\C(\A)$, and finally $\QQ^{\Ext}$ by
\[
  \widehat{\QQ}^{\Ext}: \varinjlim_{\substack{ M' \leq M,\\ M/M' \in \C }} \Ext^c_\A(M',N) \to \Ext^c_{\Sat_\C(\A)}(M,N) \mbox{.}
\]
For simplicity we write $\Ext^c_{\Sat_\C(\A)}(M,N)$ for $\Ext^c_{\Sat_\C(\A)}(\widehat{\QQ}(M),\widehat{\QQ}(N))$.
Recall that in Theorem~\ref{thm:main} we require $M$ to be $\C$-torsion-free and $N$ to be $\C$-saturated.
Since the cokernel $(\SS \circ \QQ)(M) / M$ of $\eta_M$ lies in $\C$ we can, without loss of generality, as well assume $M$ to be $\C$-saturated as the limit does not distinguish between $M$ and its saturation $(\SS \circ \QQ)(M)$.
\end{rmrk}

\subsection{The proof for \texorpdfstring{$\Ext^1$}{Ext1}}

For $c=1$ it turns out that assuming $\C\subset \A$ to be localizing is already sufficient for $\widehat{\QQ}^{\Ext}$ to be an isomorphism.
\begin{theorem} \label{thm:ext1}
  If $\C$ is a localizing subcategory of the \nameft{Abel}ian category $\A$ then
  \[
  \widehat{\QQ}^{\Ext}: \varinjlim_{\substack{ M' \leq M,\\ M/M' \in \C }} \Ext^1_\A(M',N) \to \Ext^1_{\Sat_\C(\A)}(M,N) \cong \Ext^1_{\A/\C}(M,N)
  \]
  is an isomorphism (of \nameft{Abel}ian groups) for all $\C$-saturated $M,N \in \A$.
\end{theorem}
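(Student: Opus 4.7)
The plan is to prove surjectivity and injectivity of $\widehat{\QQ}^{\Ext}$ separately, working throughout with the equivalent category $\Sat_\C(\A) \simeq \A/\C$ as in Remark~\ref{rmrk:saturated_form}, and repeatedly using Lemma~\ref{lemm:extension_of_C_free_is_C_free} to ensure $\C$-torsion-freeness of the relevant middle terms.

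\textbf{Surjectivity.} Given an extension $e : 0 \to N \to E \to M \to 0$ in $\Sat_\C(\A)$, I view $E \to M$ as a morphism in $\A$ via $\iota$; let $M' \leq M$ be its image and $K \leq E$ its kernel computed in $\A$. Since the sequence is exact in $\A/\C$, $M/M' \in \C$ and $\QQ(K) \cong N$. Because $E$ is $\C$-torsion-free (being $\C$-saturated), so is the subobject $K$, hence $\eta_K \colon K \hookrightarrow (\SS \circ \QQ)(K)$ is mono with cokernel in $\C$; composing with the canonical isomorphisms $(\SS \circ \QQ)(K) \cong \SS(N) \cong N$ produces a mono $K \hookrightarrow N$ with cokernel in $\C$. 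The pushout of the short exact sequence $0 \to K \to E \to M' \to 0$ along $K \hookrightarrow N$ yields an extension $0 \to N \to E' \to M' \to 0$ in $\A$ whose induced map $E \to E'$ has cokernel in $\C$. Applying $\widehat{\QQ}$ to this $\A$-extension reproduces $e$ via the canonical isomorphisms $\widehat{\QQ}(E') \cong \widehat{\QQ}(E) = E$ and $\widehat{\QQ}(M') \cong M$, so $[e]$ lies in the image.

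\textbf{Injectivity.} Suppose $e \colon 0 \to N \to E \xrightarrow{\beta} M' \to 0$ in $\A$ with $M/M' \in \C$ becomes split after applying $\widehat{\QQ}$. Note that $E$ is $\C$-torsion-free by Lemma~\ref{lemm:extension_of_C_free_is_C_free}. A splitting is a morphism $\sigma \colon M \to \QQ(E)$ in $\A/\C$; since $E$ is $\C$-torsion-free, the double direct limit defining $\Hom_{\A/\C}(M,\QQ(E))$ collapses to $\varinjlim_{M'' \leq M,\, M/M'' \in \C} \Hom_\A(M'',E)$, so $\sigma$ is represented by some $\sigma'' \colon M'' \to E$ in $\A$. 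Replacing $M''$ by $M' \cap M''$ (still with $M/M'' \in \C$), I may assume $M'' \leq M'$. The splitting condition then says that $\beta \circ \sigma'' \colon M'' \to M' \hookrightarrow M$ and the inclusion $M'' \hookrightarrow M$ agree in $\A/\C$; since $M$ is $\C$-torsion-free, any morphism $L \to M$ that vanishes in $\A/\C$ already vanishes in $\A$ (its image is a $\C$-subobject of $M$, hence zero), so these two morphisms agree in $\A$. Hence $\sigma''$ is a genuine splitting of the pullback $e|_{M''}$, giving $e|_{M''} = 0 \in \Ext^1_\A(M'', N)$ and thus $e = 0$ in the direct limit.

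The main obstacle is the surjectivity step, where an honest $\A$-extension must be manufactured from a sequence that is only exact in $\A/\C$. The choices $M' = \img(E \to M)$ and $K = \ker(E \to M')$ differ from $M$ and $N$ only by $\C$-objects, and the $\C$-torsion-freeness of $K$ together with the $\C$-saturation of $N$ provide exactly the canonical mono $K \hookrightarrow N$ needed for a single pushout correction that does not disturb the class in $\A/\C$. I expect that for higher $c$, as in Theorem~\ref{thm:main}, this one-step correction is no longer enough to realize an arbitrary $c$-extension in $\A/\C$ by a genuine $\A$-extension, which is what forces the stronger ``almost split localizing'' hypothesis there.
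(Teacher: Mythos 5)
Your proof is correct, but it takes a genuinely different route from the paper's in both halves. For surjectivity the paper simply uses that the embedding $\iota:\Sat_\C(\A)\hookrightarrow\A$ is left exact: a short exact sequence $0 \leftarrow M \xleftarrow{\pi} E \leftarrow N \leftarrow 0$ in $\Sat_\C(\A)$ is already left exact in $\A$ with $\A$-kernel literally $N$ and with $\coker_\A\pi \in \C$, so setting $M' = \img_\A\pi$ gives a preimage with the \emph{same} middle object $E$ and no further correction. Your construction of $K=\ker_\A(E\to M')$, the comparison mono $K\hookrightarrow N$ via $\eta_K$ and the saturation of $N$, and the pushout along it are all valid, but they are superfluous: left exactness of $\iota$ forces $K=N$, so your pushout is along an isomorphism. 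For injectivity the paper works with a retraction $\widehat{\psi}$ of $\widehat{\QQ}(\phi)$ and transports it back to $\A$ as $\psi=\eta_N^{-1}\circ\iota(\widehat{\psi})\circ\eta_E$, using naturality of the unit $\eta$ and $\C$-saturation of $N$; this splits the representative $e$ itself, at the same stage $M'$ of the colimit. You instead use a section, the collapsed single-colimit description of $\Hom_{\A/\C}(M,\QQ(E))$ (legitimate since $E$ is $\C$-torsion-free by Lemma~\ref{lemm:extension_of_C_free_is_C_free}), and the observation that a morphism into the $\C$-torsion-free $M$ whose image under $\QQ$ vanishes already vanishes in $\A$; this splits only the pulled-back extension $e|_{M''}$, i.e., the class dies at a later stage of the colimit, which is of course sufficient. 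The trade-off: the paper's argument is shorter and stays entirely within the adjunction formalism (no explicit colimit manipulations), while yours is more hands-on with the quotient-category $\Hom$ formula and makes visible exactly where the saturation hypotheses on $M$ and $N$ enter; your closing remark correctly identifies why this one-step repair fails for $c\ge 2$, which is precisely what Definition~\ref{defn:almost_split} is designed to fix.
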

\begin{proof}
  
  Recall that a short exact sequence $e:\  0 \xleftarrow{} M \xleftarrow{\pi} E \xleftarrow{} N \xleftarrow{} 0$ in $\Sat_\C(\A)$ is in general only left exact in $\A$, since the embedding functor $\iota: \Sat_\C(\A) \hookrightarrow \A$ is in general only left exact.
  The $\A$-cokernel of $\pi$ lies in $\C$, i.e., for $M' := \img \pi$ the sequence $0 \xleftarrow{} M' \xleftarrow{\pi} E \xleftarrow{} N \xleftarrow{} 0$ is exact in $\A$ and $M/M' = \coker \pi \in \C$.
  This yields the preimage of $e$ under $\widehat{\QQ}^{\Ext}$ and shows surjectivity.
  
  For the injectivity take an exact $\A$-sequence $e:\  0 \xleftarrow{} M' \xleftarrow{} E \xleftarrow{\phi} N \xleftarrow{} 0$ such that the corresponding exact $\Sat_\C(\A)$-sequence
  \[
    \widehat{\QQ}^{\Ext}(e):\  0 \xleftarrow{} \widehat{\QQ}(M') \xleftarrow{} \widehat{\QQ}(E) \xleftarrow{\widehat{\QQ}(\phi)} \widehat{\QQ}(N) \xleftarrow{} 0
  \]
  is split, i.e., $e$ is in the kernel of $\widehat{\QQ}^{\Ext}$.
  By definition of split short exact sequences, there is a $\widehat{\psi}: \widehat{\QQ}(E)\to \widehat{\QQ}(N)$ such that $\widehat{\psi}\circ\widehat{\QQ}(\phi)=\Id_{\widehat{\QQ}(N)}$.
  Since $N$ is $\C$-saturated the unit $\eta_N: N \to \iota(\widehat{\QQ}(N))$ is an isomorphism and we can define $\psi:=\eta_N^{-1}\circ \iota(\widehat{\psi}) \circ \eta_E$.
  Note that $\eta_E \circ \phi = \iota(\widehat{\QQ}(\phi)) \circ \eta_N$, by the naturality of $\eta$.
  Then $\psi\circ\phi=\eta_N^{-1}\circ \iota(\widehat{\psi}) \circ \eta_E \circ \phi=\eta_N^{-1}\circ \iota(\widehat{\psi}) \circ \iota(\widehat{\QQ}(\phi)) \circ \eta_N=\eta_N^{-1}\circ \iota(\Id_{\widehat{\QQ}(N)}) \circ \eta_N=\eta_N^{-1}\circ \Id_{\iota(\widehat{\QQ}(N))} \circ \eta_N=\Id_N$ implies that $e$ is split, i.e., zero in $\Ext^1_\A(M',N)$.
\end{proof}

\subsection{The proof of surjectivity for higher \texorpdfstring{$\Ext$}{Ext}'s}

For $c \geq 2$ we need further conditions on the categories $\A$ and $\C$.

\begin{defn} \label{defn:almost_split}
  Let $\A$ be an \nameft{Abel}ian category and $\C \subset \A$ a thick subcategory.
  For an object $a \in \A$ we call a subobject $a^\perp \leq a$ an \textbf{almost $\C$-complement} if $a^\perp$ is $\C$-torsion-free and $a / a^\perp \in \C$.
  We call $\C$ an \textbf{almost split} (thick) subcategory if for each object $a \in \A$ there exists an almost $\C$-complement $a^\perp$.
\end{defn}

\begin{rmrk} \label{rmrk:cnd_in_noeth_case}
  Let $\A$ be an \nameft{Abel}ian \nameft{Noether}ian category and $\C \subset \A$ a thick subcategory.
  The following two properties are equivalent:
  \begin{enumerate}
    \item $\C$ is almost split.
    \item For each object $a \in \A$ which does not lie in $\C$ there exists a nontrivial $\C$-torsion-free subobject of $a$. \label{rmrk:cnd_in_noeth_case.b}
  \end{enumerate}
\end{rmrk}
\begin{proof}
  We only discuss the nontrivial direction.
  Start with a nontrivial $\C$-torsion-free subobject $a_1 \leq a$.
  If $a / a_1$ lies in $\C$ we are done.
  Otherwise define $a_2$ to be the preimage in $a$ of a nontrivial $\C$-torsion-free subobject in $a / a_1$.
  By Lemma~\ref{lemm:extension_of_C_free_is_C_free}, $a_2$ is $\C$-torsion-free.
  Iterating the process yields a strictly ascending chain of $\C$-torsion-free subobjects of $a$.
  Due to \nameft{Noether}ianity this iteration has to stop, say at $a_n$, and it can only stop at $a_n$ if $a / a_n$ lies in $\C$.
\end{proof}

\begin{exmp} \label{exmp:almost_split}
  Let $S$ be the polynomial ring $k[x_0,\ldots,x_n]$ graded by total degree and $\A$ the category of f.g.~graded $S$-module.
  Consider the thick subcategory $\C$ of $0$-dimensional graded modules.
  These are the modules living in a finite degree interval.
  For any $M \in \A$ there exists a maximal submodule $N \in \C$, and let $d$ be the smallest integer with $N_{\geq d} = 0$.
  Then $M_{\geq d}$ is a nontrivial $\C$-torsion-free submodule of $M$.
  Recall that $\A / \C \simeq \Coh \PP^n_k$.
\end{exmp}

\begin{exmp} \label{exmp:non_almost_split}
  Consider the cone $\sigma = \operatorname{Cone}( (1,0), (1,2) ) \subset \mathbb{R}^2$ and the nonsmooth affine toric variety $U_\sigma$ with \nameft{Cox} ring $S=k[x,y]$, $\deg x=\deg y=1 \in \operatorname{Cl} (U_\sigma) = \Z/2\Z$, and affine coordinate ring $S_0=k[x^2,xy,y^2]$.
  The kernel of the sheafification from the category $\A$ of f.g.~graded $S$-modules to $\Coh U_\sigma$ is the thick subcategory $\C$ of graded modules with $M_0 = 0$ (cf.~\cite[Proposition~5.3.3]{CLS11}).
  The graded $S$-module $M := S / \langle x^2,xy,y^2 \rangle$ violates condition \eqref{rmrk:cnd_in_noeth_case.b} of the previous remark.
  Hence, $\C \subset \A$ is not almost split.
  
  Now we show that $\widehat{\QQ}^{\Ext}$ is not surjective.
  Note that all $\Ext^c_\A$ vanish for $c>2$, where $2$ is the global dimension of $S$.
  However, for the $S_0$-module $k = S_0 / \langle x^2,xy,y^2 \rangle$ the group $\Ext^c_{\A/\C}(k,k) \neq 0$ for all $c \in \Z_{\geq 0}$ (in fact, $\Ext^c_{\A/\C}(k,k) \cong k^4$ for all $c>1$).
  The sheafification of $k$ is the skyscraper sheaf in $\Coh U_\sigma \simeq \A/\C$ on the singular point of $\Spec(S_0)$.
\end{exmp}

One can replace $\C$-torsion-free $\A$-complexes having defects in $\C$ with exact $\A$-complexes, which are equivalent in the following sense:
\begin{defn}
  Let $\C$ be a thick subcategory of the \nameft{Abel}ian category $\A$ and $e$ an $\A$-complex.
  We say a subcomplex $e'$ \textbf{equals $e$ up to $\C$-factors} if $e/e'$ is a complex in $\C$.
\end{defn}

\begin{lemm} \label{lemm:Cfree_exact}
  Let $\C$ be an almost split thick subcategory of the \nameft{Abel}ian category $\A$ and
  \[
    e:\  0 \xleftarrow{} M \xleftarrow{} G_c \xleftarrow{} \cdots \xleftarrow{} G_1 \xleftarrow{} N \xleftarrow{} 0\mbox{.}
  \]
  a $\C$-torsion-free $\A$-complex which is exact up to $\C$-defects.
  Then there exists an \emph{exact} ($\C$-torsion-free) $\A$-subcomplex $e^\perp$ of $e$
  \[
    e^\perp:\  0 \xleftarrow{} M^\perp \xleftarrow{} G_c^\perp \xleftarrow{} \cdots \xleftarrow{} G^\perp_1 \xleftarrow{} N \xleftarrow{} 0
  \]
  which equals $e$ up to $\C$-factors.
\end{lemm}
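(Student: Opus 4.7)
The plan is to build the subcomplex $e^\perp$ iteratively from the right end, leaving $N$ untouched and using the almost-split hypothesis at each intermediate position to carve off a suitable $\C$-quotient. Write the differentials of $e$ as $\alpha_0: N \to G_1$ and $\alpha_i: G_i \to G_{i+1}$ (with $G_{c+1} := M$). As a preliminary observation, the exactness-up-to-$\C$-defects hypothesis forces $\ker \alpha_0 \in \C$; since $N$ is $\C$-torsion-free this kernel vanishes, so $\alpha_0$ is already monic. Take this as the base case $G_0^\perp := N$ of an induction on $i$.

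For the inductive step, suppose $G_{i-1}^\perp \leq G_{i-1}$ has been constructed with $G_{i-1}/G_{i-1}^\perp \in \C$, and set $B_i^\perp := \alpha_{i-1}(G_{i-1}^\perp) \leq G_i$. Writing $Z_i := \ker \alpha_i$ and $B_i := \img \alpha_{i-1}$, one has $B_i^\perp \leq B_i \leq Z_i$, and the key computation is that $Z_i/B_i^\perp \in \C$: the subquotient $B_i/B_i^\perp$ is a quotient of $G_{i-1}/G_{i-1}^\perp$ via $\alpha_{i-1}$ and hence lies in $\C$, while $Z_i/B_i \in \C$ is the given homology defect, so thickness of $\C$ delivers $Z_i/B_i^\perp \in \C$. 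Now apply the almost-split hypothesis to $G_i/B_i^\perp$, pick an almost $\C$-complement, and let $G_i^\perp \leq G_i$ be its preimage; this gives $G_i/G_i^\perp \in \C$ together with $G_i^\perp/B_i^\perp$ being $\C$-torsion-free. Exactness of the restricted complex at position $G_i^\perp$ drops out immediately: the homology $(Z_i \cap G_i^\perp)/B_i^\perp$ embeds into both $Z_i/B_i^\perp \in \C$ and into the $\C$-torsion-free quotient $G_i^\perp/B_i^\perp$, so it is zero. To close off on the left, set $M^\perp := \alpha_c(G_c^\perp) \leq M$; it is $\C$-torsion-free as a subobject of $M$, and $M/M^\perp$ is an extension of $\img(\alpha_c)/\alpha_c(G_c^\perp)$ (a quotient of $G_c/G_c^\perp$) by $M/\img(\alpha_c)$, both in $\C$.

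The main obstacle in carrying out this plan is the bookkeeping required to maintain the invariant $Z_i/B_i^\perp \in \C$ across the induction; the rest of the argument is essentially formal. Once the invariant is secured, the almost-split axiom supplies $G_i^\perp$ at each stage, and exactness of the subcomplex reduces to the basic fact that a $\C$-torsion-free object admits no nonzero $\C$-subobject. The $\C$-torsion-freeness of each $G_i^\perp$ is inherited from $G_i$, and the containment $B_i^\perp \leq G_i^\perp$ is built into the preimage construction, so no further axiomatic input is needed.
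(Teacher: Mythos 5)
Your construction is exactly the paper's: you build $G_i^\perp$ inductively as the preimage in $G_i$ of an almost $\C$-complement in $G_i/\img(G_{i-1}^\perp \to G_i)$ and take $M^\perp$ to be the image of $G_c^\perp$ in $M$, with the same extension/epimorphic-image arguments showing the quotient complex lies in $\C$. The only difference is that you spell out the exactness verification (the invariant $Z_i/B_i^\perp \in \C$ clashing with the $\C$-torsion-freeness of $G_i^\perp/B_i^\perp$), which the paper leaves implicit in its diagram; this is a correct and welcome elaboration, not a different route.
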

\begin{center}
 \begin{tikzpicture}[
  C/.style={color=red,line width=1.5pt,dotted},
  new/.style={color=blue,line width=2pt},
  map/.style={color=gray},
  filled/.style={line width=5,color=white},
  scale=0.84]
    
  \newcommand{\circlewidth}{1.5pt}
  
  \coordinate (a) at (0,1.0);
  \coordinate (c) at (-0.5,0.4);
  \coordinate (m) at (-1.5,0);
  
  \coordinate (G0b);
  \coordinate (G0) at ($(G0b)+(a)$);
  
  \coordinate (0_G1) at ($(G0b)+(m)$);
  \coordinate (G1b) at ($(G0)+(m)$);
  \coordinate (G1c) at ($(G1b)+(c)$);
  \coordinate (G1d) at ($(G1b)+(a)$);
  \coordinate (G1cd) at ($(G1c)+(a)$);
  \coordinate (G1) at ($(G1cd)+(c)$);
  
  \coordinate (0_G2) at ($(G1c)+(m)$);
  \coordinate (G2b) at ($(G1cd)+(m)$);
  \coordinate (G2c) at ($(G1)+(m)$);
  \coordinate (G2d) at ($(G2b)+(a)$);
  \coordinate (G2cd) at ($(G2c)+(a)$);
  \coordinate (G2cc) at ($(G2c)+(c)$);
  \coordinate (G2ccd) at ($(G2cc)+(a)$);
  \coordinate (G2) at ($(G2ccd)+(c)$);
  
  \coordinate (0_Gc1) at ($(G2cc)+3*(m)$);
  \coordinate (Gc1b) at ($(G2ccd)+3*(m)$);
  \coordinate (Gc1c) at ($(G2)+3*(m)$);
  \coordinate (Gc1d) at ($(Gc1b)+(a)$);
  \coordinate (Gc1cd) at ($(Gc1c)+(a)$);
  \coordinate (Gc1cc) at ($(Gc1c)+(c)$);
  \coordinate (Gc1ccd) at ($(Gc1cc)+(a)$);
  \coordinate (Gc1) at ($(Gc1ccd)+(c)$);
  
  \coordinate (0_M) at ($(Gc1cc)+(m)$);
  \coordinate (Mb) at ($(Gc1ccd)+(m)$);
  \coordinate (Mc) at ($(Gc1)+(m)$);
  \coordinate (M) at ($(Mc)+(c)$);
  
  \node at (G0b) [left] {$ $};
  \node at (G0) [above] {$N$};
  
  \node at (0_G1) [left] {$ $};
  \node at (G1b) [left] {$ $};
  \node at (G1c) [left] {$ $};
  \node at (G1d) [above right] {$G_1^\perp$};
  \node at (G1cd) [left] {$ $};
  \node at (G1) [above right] {$G_1$};
  
  \node at (0_G2) [left] {$ $};
  \node at (G2b) [left] {$ $};
  \node at (G2c) [left] {$ $};
  \node at (G2d) [above right] {$G_2^\perp$};
  \node at (G2cd) [left] {$ $};
  \node at (G2cc) [left] {$ $};
  \node at (G2ccd) [left] {$ $};
  \node at (G2) [above right] {$G_2$};
  
  \node at (0_Gc1) [left] {$ $};
  \node at (Gc1b) [left] {$ $};
  \node at (Gc1c) [left] {$ $};
  \node at (Gc1d) [above right] {$G_c^\perp$};
  \node at (Gc1cd) [left] {$ $};
  \node at (Gc1cc) [left] {$ $};
  \node at (Gc1ccd) [left] {$ $};
  \node at (Gc1) [above right] {$G_c$};
  
  \node at (0_M) [left] {$ $};
  \node at (Mb) [below left=-1pt,yshift=3pt] {$M^\perp$};
  \node at (Mc) [above,xshift=3pt] {$\widetilde{M}$};
  \node at (M) [left] {$M$};
  
  \filldraw[filled,fill=green!25] \FillArea{(G0b)}{(G0)}{(G1b)}{(0_G1)};
  
  \filldraw[filled,fill=yellow!35] \FillArea{(G1b)}{(G1d)}{(G1cd)}{(G1c)};
  \filldraw[filled,fill=yellow!35] \FillArea{(G1cd)}{(G1c)}{(0_G2)}{(G2b)};
  
  \filldraw[filled,fill=orange!25] \FillArea{(G2b)}{(G2d)}{(G2cd)}{(G2c)};
  \filldraw[filled,fill=orange!25] \FillArea{(G2c)}{(G2cd)}{(G2ccd)}{(G2cc)};
  \filldraw[filled,fill=orange!25] \FillArea{(G2ccd)}{(G2cc)}{($(G2cc)+(m)$)}{($(G2ccd)+(m)$)};
  
  \filldraw[filled,fill=red!25] \FillArea{(0_Gc1)}{(Gc1b)}{($(Gc1b)-(m)$)}{($(0_Gc1)-(m)$)};
  
  \filldraw[filled,fill=purple!40] \FillArea{(Gc1b)}{(Gc1d)}{(Gc1cd)}{(Gc1c)};
  \filldraw[filled,fill=purple!40] \FillArea{(Gc1c)}{(Gc1cd)}{(Gc1ccd)}{(Gc1cc)};
  \filldraw[filled,fill=purple!40] \FillArea{(Gc1ccd)}{(Gc1cc)}{(0_M)}{(Mb)};
  
  \draw[new] (G0b) -- (G0);
  
  \draw[new] (0_G1) -- (G1b)  -- (G1d);
  \draw[C] (G1b) -- (G1c);
  \draw[new] (G1c) -- (G1cd);
  \draw[C] (G1d) -- (G1cd) -- (G1);
  
  \draw[new] (0_G2) -- (G2b) -- (G2d);
  \draw[new] (G2c) -- (G2cd);
  \draw[new] (G2cc) -- (G2ccd);
  \draw[C] (G2b) -- (G2c) -- (G2cc);
  \draw[C] (G2d) -- (G2cd) -- (G2ccd) -- (G2);
  
  \draw[new] (0_Gc1) -- (Gc1b) -- (Gc1d);
  \draw[new] (Gc1c) -- (Gc1cd);
  \draw[new] (Gc1cc) -- (Gc1ccd);
  \draw[C] (Gc1b) -- (Gc1c) -- (Gc1cc);
  \draw[C] (Gc1d) -- (Gc1cd) -- (Gc1ccd) -- (Gc1);
  
  \draw[new] (0_M) -- (Mb);
  \draw[C] (Mb) -- (Mc) -- (M);
  
  \fill (G0b) circle (\circlewidth);
  \fill (G0) circle (\circlewidth);
  
  \fill (0_G1) circle (\circlewidth);
  \fill (G1b) circle (\circlewidth);
  \fill (G1c) circle (\circlewidth);
  \fill (G1d) circle (\circlewidth);
  \fill (G1cd) circle (\circlewidth);
  \fill (G1) circle (\circlewidth);
  
  \fill (0_G2) circle (\circlewidth);
  \fill (G2b) circle (\circlewidth);
  \fill (G2c) circle (\circlewidth);
  \fill (G2d) circle (\circlewidth);
  \fill (G2cd) circle (\circlewidth);
  \fill (G2cc) circle (\circlewidth);
  \fill (G2ccd) circle (\circlewidth);
  \fill (G2) circle (\circlewidth);
  
  \fill (0_Gc1) circle (\circlewidth);
  \fill (Gc1b) circle (\circlewidth);
  \fill (Gc1c) circle (\circlewidth);
  \fill (Gc1d) circle (\circlewidth);
  \fill (Gc1cd) circle (\circlewidth);
  \fill (Gc1cc) circle (\circlewidth);
  \fill (Gc1ccd) circle (\circlewidth);
  \fill (Gc1) circle (\circlewidth);
  
  \fill (0_M) circle (\circlewidth);
  \fill (Mb) circle (\circlewidth);
  \fill (Mc) circle (\circlewidth);
  \fill (M) circle (\circlewidth);
  
  
  \coordinate (right) at (0.1,0);
  \coordinate (left) at (-0.1,0);
  
  \draw[map,stealth'-] ($(0_G1)+(right)$) -- ($(left)+(G0b)$);
  \draw[map,stealth'-] ($(G1b)+(right)$) -- ($(left)+(G0)$);
  
  \draw[map,stealth'-] ($(0_G2)+(right)$) -- ($(left)+(G1c)$);
  \draw[map,stealth'-] ($(G2b)+(right)$) -- ($(left)+(G1cd)$);
  \draw[map,stealth'-] ($(G2c)+(right)$) -- ($(left)+(G1)$);
  
  \draw[map,stealth'-] ($(0_Gc1)+(right)$) -- ($(0_Gc1)-(right)-(m)$);
  \draw[map,stealth'-] ($(Gc1b)+(right)$) -- ($(Gc1b)-(right)-(m)$);
  \draw[map,stealth'-] ($(Gc1c)+(right)$) -- ($(Gc1c)-(right)-(m)$);
  \node at ($1/2*(0_Gc1)+1/2*(G2)$) {$\dots$};
  \draw[map,stealth'-] ($(G2cc)+(m)-(left)$) -- ($(left)+(G2cc)$);
  \draw[map,stealth'-] ($(G2ccd)+(m)-(left)$) -- ($(left)+(G2ccd)$);
  \draw[map,stealth'-] ($(G2)+(m)-(left)$) -- ($(left)+(G2)$);
  
  \draw[map,stealth'-] ($(0_M)+(right)$) -- ($(left)+(Gc1cc)$);
  \draw[map,stealth'-] ($(Mb)+(right)$) -- ($(left)+(Gc1ccd)$);
  \draw[map,stealth'-] ($(Mc)+(right)$) -- ($(left)+(Gc1)$);
  
 \end{tikzpicture}
\end{center}
\begin{proof}
  Define $\overline{G}_1 := G_1/\img_\A(G_1 \hookleftarrow N)$.
  Construct the subobject $G_1^\perp \leq G_1$ as the preimage in $G_1$ of an almost $\C$-complement in $\overline{G}_1$.
  Since $G_1^\perp$ is the preimage of an almost $\C$-complement it follows that $G_1/G_1^\perp \in \C$.
  
  For $i>1$ we assume to have constructed $G_{i-1}^\perp \le G_{i-1}$ with $G_{i-1}/G_{i-1}^\perp \in \C$.
  We proceed inductively and consider $\overline{G}_i := G_i/\img_\A(G_i \xleftarrow{} G_{i-1} \hookleftarrow G_{i-1}^\perp)$.
  Again construct the subobject $G_i^\perp \leq G_i$ as the preimage in $G_i$ of an almost $\C$-complement in $\overline{G}_i$.
  As above $G_i/G_i^\perp \in \C$.
  
  Finally define the subobject $M^\perp \leq M$ as the $\A$-image $\img_\A(M \xleftarrow{} G_{c-1} \hookleftarrow G_{c-1}^\perp)$.
  Let $\widetilde{M} := \img_\A(M \xleftarrow{} G_{c-1})$.
  Then $\widetilde{M}/M^\perp \in \C$ as an epimorphic image of $G_{c-1}/G_{c-1}^\perp$ under $M \xleftarrow{} G_{c-1}$.
  Since also $M/\widetilde{M} \in \C$ it follows that $M/M^\perp \in \C$ as an extension of two objects in $\C$.
  The whole argument is visualized in the diagram\footnote{Cf.~\cite{BaSF} for the use of \nameft{Hasse} diagrams to prove statements in \nameft{Abel}ian categories.} below, where the dotted lines stand for (factor) objects in $\C$.
\end{proof}

The above lemma yields the preimages needed to prove the surjectivity of $\widehat{\QQ}^{\Ext}$.

\begin{prop} \label{prop:surj}
  Let $\C$ be an almost split localizing subcategory of the \nameft{Abel}ian category $\A$.
  Then
  \[
    \widehat{\QQ}^{\Ext}: \varinjlim_{\substack{ M' \leq M,\\ M/M' \in \C}} \Ext^c_\A(M',N) \to \Ext^c_{\Sat_\C(\A)}(M,N)
  \]
  is an epimorphism (of \nameft{Abel}ian groups) for all $\C$-saturated $M,N \in \A$.
 \end{prop}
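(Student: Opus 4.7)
The plan is to produce, for every cocycle $e \in \Ext^c_{\Sat_\C(\A)}(M,N)$, an explicit preimage in the direct limit. I start with a representing exact $\Sat_\C(\A)$-sequence
\[
  e:\  0 \xleftarrow{} M \xleftarrow{} G_c \xleftarrow{} \cdots \xleftarrow{} G_1 \xleftarrow{} N \xleftarrow{} 0\mbox{.}
\]
All its terms are $\C$-saturated and hence $\C$-torsion-free. Moreover, viewed in $\A$ through the left exact inclusion $\iota:\Sat_\C(\A)\hookrightarrow\A$, the complex $e$ has $\A$-homology annihilated by $\widehat{\QQ}$ (because $\widehat{\QQ}$ is exact and $\widehat{\QQ}\circ\iota\cong\Id_{\Sat_\C(\A)}$), hence lying in $\C$. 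In other words, $e$ is a $\C$-torsion-free $\A$-complex exact up to $\C$-defects -- precisely the hypothesis of Lemma~\ref{lemm:Cfree_exact}.

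I would then invoke that lemma to extract an exact $\A$-subcomplex
\[
  e^\perp:\  0 \xleftarrow{} M^\perp \xleftarrow{} G_c^\perp \xleftarrow{} \cdots \xleftarrow{} G_1^\perp \xleftarrow{} N \xleftarrow{} 0
\]
with $M/M^\perp\in\C$ and $e/e^\perp$ a complex in $\C$. The class $[e^\perp]\in\Ext^c_\A(M^\perp,N)$ then maps, via the structure map of the colimit indexed by $M^\perp\leq M$, to the candidate preimage in $\varinjlim_{M'\leq M,\,M/M'\in\C}\Ext^c_\A(M',N)$.

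Finally, I would verify $\widehat{\QQ}^{\Ext}([e^\perp])=[e]$. The subcomplex inclusion $e^\perp\hookrightarrow e$ is a chain morphism in $\A$ which is the identity on $N$, the inclusion $M^\perp\hookrightarrow M$ at the left end, and the inclusions $G_i^\perp\hookrightarrow G_i$ at each intermediate position; every one of these monomorphisms has cokernel in $\C$. Applying the exact reflector $\widehat{\QQ}$ (whose kernel is $\C$) converts them all into isomorphisms, producing a chain isomorphism between $\widehat{\QQ}(e^\perp)$ and $\widehat{\QQ}(e)=e$. Under the canonical identification $\widehat{\QQ}(M^\perp)\xrightarrow{\sim}M$ that is built into the definition of $\widehat{\QQ}^{\Ext}$ (cf.~Section~\ref{sec:QExt} and Remark~\ref{rmrk:saturated_form}), this exhibits the required Yoneda equivalence. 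I expect no serious obstacle beyond Lemma~\ref{lemm:Cfree_exact} itself, which has already been established; the remaining work is straightforward bookkeeping around the exactness of $\widehat{\QQ}$ and the fact that its kernel is $\C$.
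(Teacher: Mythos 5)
Your argument is exactly the paper's proof of Proposition~\ref{prop:surj}: represent the class by an exact $\Sat_\C(\A)$-complex, observe that $\iota(\widehat{e})$ is a $\C$-torsion-free $\A$-complex exact up to $\C$-defects, and apply Lemma~\ref{lemm:Cfree_exact} to obtain the preimage $e^\perp$ in the colimit. Your extra verification that $\widehat{\QQ}^{\Ext}([e^\perp])=[e]$ (via the subcomplex inclusion having cokernels in $\C$, which $\widehat{\QQ}$ turns into isomorphisms) is correct and merely spells out what the paper leaves implicit.
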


\begin{proof}
  For the surjectivity consider a $c$-extension $\widehat{e} \in \Ext^c_{\Sat_\C(\A)}(M,N)$ for $c > 0$, represented by an exact $\Sat_\C(\A)$-complex
  \[
    \widehat{e}:\  0 \xleftarrow{} M \xleftarrow{} G_c \xleftarrow{} \cdots \xleftarrow{} G_1 \xleftarrow{} N \xleftarrow{} 0\mbox{.}
  \]
  Lemma~\ref{lemm:Cfree_exact} applied to the $\A$-complex $e = \iota(\widehat{e})$ which is exact up to $\C$-defects yields a preimage $e^\perp$ of $\widehat{e}$.
\end{proof}

Due to the left exactness of $\iota$ we can even choose $G_1^\perp:=G_1$ when applying Lemma~\ref{lemm:Cfree_exact} in the proof of Proposition~\ref{prop:surj}.
This is illustrated by the diagram below.
\begin{center}
 \begin{tikzpicture}[
  C/.style={color=red,line width=1.5pt,dotted},
  new/.style={color=blue,line width=2pt},
  map/.style={color=gray},
  filled/.style={line width=4,color=white},
  scale=0.6]
    
  \newcommand{\circlewidth}{1.5pt}
  
  \coordinate (a) at (0,1.0);
  \coordinate (c) at (-0.5,0.4);
  \coordinate (m) at (-1.7,0);
  
  \coordinate (0_N);
  \coordinate (N) at ($(0_N)+(a)$);
  
  \coordinate (0_G0) at ($(0_N)+(m)$);
  \coordinate (G0b) at ($(N)+(m)$);
  \coordinate (G0) at ($(G0b)+(a)$);
  
  \coordinate (0_G1) at ($(G0b)+(m)$);
  \coordinate (G1b) at ($(G0)+(m)$);
  \coordinate (G1c) at ($(G1b)+(c)$);
  \coordinate (G1d) at ($(G1b)+(a)$);
  \coordinate (G1cd) at ($(G1c)+(a)$);
  \coordinate (G1) at ($(G1cd)+(c)$);
  
  \coordinate (0_G2) at ($(G1c)+(m)$);
  \coordinate (G2b) at ($(G1cd)+(m)$);
  \coordinate (G2c) at ($(G1)+(m)$);
  \coordinate (G2d) at ($(G2b)+(a)$);
  \coordinate (G2cd) at ($(G2c)+(a)$);
  \coordinate (G2cc) at ($(G2c)+(c)$);
  \coordinate (G2ccd) at ($(G2cc)+(a)$);
  \coordinate (G2) at ($(G2ccd)+(c)$);
  
  \coordinate (0_Gc1) at ($(G2cc)+3*(m)$);
  \coordinate (Gc1b) at ($(G2ccd)+3*(m)$);
  \coordinate (Gc1c) at ($(G2)+3*(m)$);
  \coordinate (Gc1d) at ($(Gc1b)+(a)$);
  \coordinate (Gc1cd) at ($(Gc1c)+(a)$);
  \coordinate (Gc1cc) at ($(Gc1c)+(c)$);
  \coordinate (Gc1ccd) at ($(Gc1cc)+(a)$);
  \coordinate (Gc1) at ($(Gc1ccd)+(c)$);
  
  \coordinate (0_M) at ($(Gc1cc)+(m)$);
  \coordinate (Mb) at ($(Gc1ccd)+(m)$);
  \coordinate (Mc) at ($(Gc1)+(m)$);
  \coordinate (M) at ($(Mc)+(c)$);
  
  \node at (0_N) [left] {$ $};
  \node at (N) [above] {$N$};
  
  \node at (0_G0) [left] {$ $};
  \node at (G0b) [left] {$ $};
  \node at (G0) [above right] {$G_1=G_1^\perp$};
  
  \node at (0_G1) [left] {$ $};
  \node at (G1b) [left] {$ $};
  \node at (G1c) [left] {$ $};
  \node at (G1d) [right] {$G_2^\perp$};
  \node at (G1cd) [left] {$ $};
  \node at (G1) [above right] {$G_2$};
  
  \node at (0_G2) [left] {$ $};
  \node at (G2b) [left] {$ $};
  \node at (G2c) [left] {$ $};
  \node at (G2d) [above right] {$G_3^\perp$};
  \node at (G2cd) [left] {$ $};
  \node at (G2cc) [left] {$ $};
  \node at (G2ccd) [left] {$ $};
  \node at (G2) [above right] {$G_3$};
  
  \filldraw[filled,fill=blue!20] \FillArea{(0_N)}{(N)}{(G0b)}{(0_G0)};
  
  \filldraw[filled,fill=green!25] \FillArea{(G0b)}{(G0)}{(G1b)}{(0_G1)};
  
  \filldraw[filled,fill=yellow!35] \FillArea{(G1b)}{(G1d)}{(G1cd)}{(G1c)};
  \filldraw[filled,fill=yellow!35] \FillArea{(G1cd)}{(G1c)}{(0_G2)}{(G2b)};
  
  \filldraw[filled,fill=orange!25] \FillArea{(G2b)}{(G2d)}{(G2cd)}{(G2c)};
  \filldraw[filled,fill=orange!25] \FillArea{(G2c)}{(G2cd)}{(G2ccd)}{(G2cc)};
  \filldraw[filled,fill=orange!25] \FillArea{(G2ccd)}{(G2cc)}{($(G2cc)+(m)$)}{($(G2ccd)+(m)$)};
  
  \draw[new] (0_N) -- (N);
  
  \draw[new] (0_G0) -- (G0b) -- (G0);
  
  \draw[new] (0_G1) -- (G1b)  -- (G1d);
  \draw[C] (G1b) -- (G1c);
  \draw (G1c) -- (G1cd);
  \draw[C] (G1d) -- (G1cd) -- (G1);
  
  \draw[new] (0_G2) -- (G2b) -- (G2d);
  \draw (G2c) -- (G2cd);
  \draw (G2cc) -- (G2ccd);
  \draw[C] (G2b) -- (G2c) -- (G2cc);
  \draw[C] (G2d) -- (G2cd) -- (G2ccd) -- (G2);
  
  \fill (0_N) circle (\circlewidth);
  \fill (N) circle (\circlewidth);
  
  \fill (0_G0) circle (\circlewidth);
  \fill (G0b) circle (\circlewidth);
  \fill (G0) circle (\circlewidth);
  
  \fill (0_G1) circle (\circlewidth);
  \fill (G1b) circle (\circlewidth);
  \fill (G1c) circle (\circlewidth);
  \fill (G1d) circle (\circlewidth);
  \fill (G1cd) circle (\circlewidth);
  \fill (G1) circle (\circlewidth);
  
  \fill (0_G2) circle (\circlewidth);
  \fill (G2b) circle (\circlewidth);
  \fill (G2c) circle (\circlewidth);
  \fill (G2d) circle (\circlewidth);
  \fill (G2cd) circle (\circlewidth);
  \fill (G2cc) circle (\circlewidth);
  \fill (G2ccd) circle (\circlewidth);
  \fill (G2) circle (\circlewidth);
  
  
  \coordinate (right) at (0.1,0);
  \coordinate (left) at (-0.1,0);
  
  \draw[map,stealth'-] ($(0_G0)+(right)$) -- ($(left)+(0_N)$);
  \draw[map,stealth'-] ($(G0b)+(right)$) -- ($(left)+(N)$);
  
  \draw[map,stealth'-] ($(0_G1)+(right)$) -- ($(left)+(G0b)$);
  \draw[map,stealth'-] ($(G1b)+(right)$) -- ($(left)+(G0)$);
  
  \draw[map,stealth'-] ($(0_G2)+(right)$) -- ($(left)+(G1c)$);
  \draw[map,stealth'-] ($(G2b)+(right)$) -- ($(left)+(G1cd)$);
  \draw[map,stealth'-] ($(G2c)+(right)$) -- ($(left)+(G1)$);
  \node at ($1/2*(0_Gc1)+1/2*(G2)$) {$\dots$};
  \draw[map,stealth'-] ($(G2cc)+(m)-(left)$) -- ($(left)+(G2cc)$);
  \draw[map,stealth'-] ($(G2ccd)+(m)-(left)$) -- ($(left)+(G2ccd)$);
  \draw[map,stealth'-] ($(G2)+(m)-(left)$) -- ($(left)+(G2)$);

 \end{tikzpicture}
\end{center}

\subsection{The proof of injectivity for higher \texorpdfstring{$\Ext$}{Ext}'s}

To prove the injectivity we show that almost $\C$-complements exist on the level of exact $\A$-complexes.

\begin{defn}
  Let $\C$ be a thick subcategory of the \nameft{Abel}ian category $\A$ and
  \[
    e:\  0 \xleftarrow{} M \xleftarrow{} G_c \xleftarrow{} \cdots \xleftarrow{} G_1 \xleftarrow{} N \xleftarrow{} 0
  \]
  an $\A$-complex where $M, N$ are $\C$-torsion-free.
  We call a $\C$-torsion-free $\A$-subcomplex $\widetilde{e} \leq e$
  \[
    \widetilde{e}:\  0 \xleftarrow{} \widetilde{M} \xleftarrow{} \widetilde{G}_c \xleftarrow{} \cdots \xleftarrow{} \widetilde{G}_1 \xleftarrow{} N \xleftarrow{} 0\mbox{,}
  \]
  which equals $e$ up to $\C$-factors an \textbf{almost $\C$-complement} in $e$.
\end{defn}

\begin{prop} \label{prop:Cfree}
  Let $\C$ be an almost split thick subcategory of the \nameft{Abel}ian category $\A$ and
  \[
    e:\  0 \xleftarrow{} M \xleftarrow{} G_c \xleftarrow{} \cdots \xleftarrow{} G_1 \xleftarrow{} N \xleftarrow{} 0
  \]
  an \emph{exact} $\A$-complex where $M, N$ are $\C$-torsion-free.
  Then there exists an \emph{exact} almost $\C$-complement $\widetilde{e} \in \Ext^c_\A(\widetilde{M},N)$ in $e$.
\end{prop}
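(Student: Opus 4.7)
The plan is to mimic the inductive construction of Lemma~\ref{lemm:Cfree_exact}, but this time to also carry along a $\C$-torsion-free replacement $\widetilde{K}_i$ of each intermediate kernel $K_i := \img_\A(G_{i-1} \to G_i)$, so that exactness is preserved while $\C$-torsion-freeness is gained at every stage. Set $G_0 := N$ and $G_{c+1} := M$; by exactness $K_1 = N$, $K_{c+1} = M$, and $K_i = \ker(G_i \to G_{i+1})$ for $1 \le i \le c$. Starting from $\widetilde{K}_1 := N$ (which is $\C$-torsion-free by hypothesis), I build $\widetilde{G}_i \le G_i$ and $\widetilde{K}_{i+1} \le K_{i+1}$ inductively for $i = 1, \ldots, c$, maintaining the invariants that $\widetilde{K}_i$ is $\C$-torsion-free, $K_i/\widetilde{K}_i \in \C$, and that $\widetilde{G}_i \supseteq \widetilde{K}_i$ is a $\C$-torsion-free subobject of $G_i$ with $G_i/\widetilde{G}_i \in \C$.

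At stage $i$, form $\overline{G}_i := G_i/\widetilde{K}_i$, which fits in the short exact sequence
\[
  0 \to K_i/\widetilde{K}_i \to \overline{G}_i \to K_{i+1} \to 0
\]
with $K_i/\widetilde{K}_i \in \C$. Choose an almost $\C$-complement $\overline{G}_i^\perp \le \overline{G}_i$ (existing by almost splitness of $\C$) and let $\widetilde{G}_i \le G_i$ be its preimage. Then $G_i/\widetilde{G}_i \cong \overline{G}_i/\overline{G}_i^\perp \in \C$, and $\widetilde{G}_i/\widetilde{K}_i = \overline{G}_i^\perp$ is $\C$-torsion-free, so Lemma~\ref{lemm:extension_of_C_free_is_C_free} gives that $\widetilde{G}_i$ itself is $\C$-torsion-free. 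The crucial matching identity $\widetilde{G}_i \cap K_i = \widetilde{K}_i$ follows because $(\widetilde{G}_i \cap K_i)/\widetilde{K}_i$, seen inside $\overline{G}_i$, is contained both in $K_i/\widetilde{K}_i \in \C$ and in the $\C$-torsion-free $\overline{G}_i^\perp$, hence vanishes. Now define $\widetilde{K}_{i+1} := \img_\A(\widetilde{G}_i \to K_{i+1}) \cong \widetilde{G}_i/\widetilde{K}_i = \overline{G}_i^\perp$, which is $\C$-torsion-free, while $K_{i+1}/\widetilde{K}_{i+1} \cong \overline{G}_i/\overline{G}_i^\perp \in \C$; this propagates the invariants. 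Finally set $\widetilde{M} := \widetilde{K}_{c+1}$.

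The resulting candidate
\[
  \widetilde{e}:\ 0 \xleftarrow{} \widetilde{M} \xleftarrow{} \widetilde{G}_c \xleftarrow{} \cdots \xleftarrow{} \widetilde{G}_1 \xleftarrow{} N \xleftarrow{} 0
\]
really is a subcomplex of $e$, since by construction $\img_\A(\widetilde{G}_{i-1} \to G_i) = \widetilde{K}_i \subseteq \widetilde{G}_i$; and $e/\widetilde{e}$ is a complex in $\C$ term by term, so $\widetilde{e}$ equals $e$ up to $\C$-factors. Exactness at $\widetilde{G}_i$ reduces to
\[
  \ker(\widetilde{G}_i \to \widetilde{G}_{i+1}) \;=\; \widetilde{G}_i \cap K_i \;=\; \widetilde{K}_i \;=\; \img_\A(\widetilde{G}_{i-1} \to \widetilde{G}_i),
\]
combining the matching identity above with the very definition of $\widetilde{K}_i$. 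The main obstacle is precisely this matching condition $\widetilde{G}_i \cap K_i = \widetilde{K}_i$: one cannot simply pick an almost $\C$-complement of $G_i$ directly, since a naive choice could enlarge the kernel of $\widetilde{G}_i \to \widetilde{G}_{i+1}$ beyond $\widetilde{K}_i$ and destroy exactness one position later. Modding out $\widetilde{K}_i$ before selecting the almost $\C$-complement is exactly what separates the $\C$-noise from the $\C$-torsion-free piece we intend to keep.
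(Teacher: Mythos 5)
Your construction is correct, but it follows a genuinely different route from the paper's. The paper proves the proposition by induction on $c$: it splits $e$ at the $M$-end into a short exact sequence $e_1$ and a shorter exact complex $e_2$, applies Lemma~\ref{lemm:Cfree} to $e_1$, pulls $e_2$ back along $L'\hookrightarrow L$, invokes the induction hypothesis on the result, and finally repairs the single remaining $\C$-defect with Lemma~\ref{lemm:Cfree_exact} before concatenating. You instead make one inductive sweep along the complex from $N$ towards $M$, at each stage taking the preimage in $G_i$ of an almost $\C$-complement of $G_i/\widetilde{K}_i$ and propagating the invariants; this is structurally the construction in the proof of Lemma~\ref{lemm:Cfree_exact}, upgraded so that $\C$-torsion-freeness is gained along the way (via Lemma~\ref{lemm:extension_of_C_free_is_C_free}, since $\widetilde{G}_i$ is an extension of the torsion-free $\overline{G}_i^\perp$ by the torsion-free $\widetilde{K}_i$) while exactness is preserved. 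Your matching identity $\widetilde{G}_i\cap K_i=\widetilde{K}_i$ (a $\C$-subobject of a torsion-free object vanishes) is precisely where your argument does the work that the paper delegates to the pullback $(L'\hookrightarrow L)e_2$ and the final application of Lemma~\ref{lemm:Cfree_exact}. What each approach buys: your proof is more direct and self-contained, needing neither Lemma~\ref{lemm:Cfree} nor Lemma~\ref{lemm:Cfree_exact} and producing the exact torsion-free subcomplex in a single pass; the paper's reduction keeps each step small and reuses lemmas it needs elsewhere anyway (Proposition~\ref{prop:surj}, Theorem~\ref{thm:iso}). One small inaccuracy in your write-up: $K_{i+1}/\widetilde{K}_{i+1}$ is in general only an epimorphic image of $\overline{G}_i/\overline{G}_i^\perp$, namely $\overline{G}_i/\bigl(\overline{G}_i^\perp + K_i/\widetilde{K}_i\bigr)$, not isomorphic to it; since $\C$ is closed under factor objects, the conclusion $K_{i+1}/\widetilde{K}_{i+1}\in\C$ is unaffected.
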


The value of this proposition lies in the following fact:
\begin{lemm}
  The $\A$-subcomplex $\widetilde{e} \leq e$ in the previous proposition represents in the colimit $\varinjlim_{\substack{ M' \leq M,\\ M/M' \in \C}} \Ext^c_\A(M',N)$ the same $c$-cocycle as $e$.
\end{lemm}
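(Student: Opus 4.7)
The plan is to compute the image of $[e] \in \Ext^c_\A(M,N)$ under the transition morphism of the colimit system that targets the index $\widetilde{M}$, and to identify it with $[\widetilde{e}]$ in $\Ext^c_\A(\widetilde{M},N)$. Both $M' = M$ (with representative $e$) and $M' = \widetilde{M}$ (with representative $\widetilde{e}$) are admissible indices in the directed system over $\{M' \leq M : M/M' \in \C\}$, so once their classes agree in $\Ext^c_\A(\widetilde{M},N)$ they agree in $\varinjlim_{M'} \Ext^c_\A(M',N)$.

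Write $\widetilde{\iota}: \widetilde{M} \hookrightarrow M$ for the inclusion. By the description of the first-argument functoriality from Section~\ref{sec:QExt}, the transition map $\Ext^c_\A(M,N) \to \Ext^c_\A(\widetilde{M},N)$ is Yoneda composition with $\widetilde{\iota}$, and by the pullback formula for Yoneda composition with a morphism recalled in Section~\ref{sec:Yoneda}, the composite $\widetilde{\iota} \cdot e$ is the $c$-extension obtained from $e$ by replacing only $G_c$ with the fibre product $G_c' := G_c \times_M \widetilde{M}$ (leaving $G_{c-1}, \ldots, G_1, N$ unchanged) and taking as new leftmost map the pullback projection $G_c' \to \widetilde{M}$.

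It then suffices to exhibit a chain map $\widetilde{e} \to \widetilde{\iota} \cdot e$ which is the identity on both ends $\widetilde{M}$ and $N$, since any such chain map forces $[\widetilde{e}] = [\widetilde{\iota} \cdot e]$ by the directed Yoneda relation. I would take the identity on $\widetilde{M}$ and on $N$, the inclusions $\widetilde{G}_i \hookrightarrow G_i$ for $1 \leq i \leq c-1$ inherited from $\widetilde{e} \leq e$, and on $\widetilde{G}_c$ the unique morphism $\widetilde{G}_c \to G_c'$ produced by the universal property of the pullback from the compatible pair consisting of the inclusion $\widetilde{G}_c \hookrightarrow G_c$ and the restricted boundary $\widetilde{G}_c \to \widetilde{M}$; compatibility is precisely the statement that $\widetilde{e}$ is an $\A$-subcomplex of $e$. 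Commutativity of the remaining squares is immediate since every non-trivial vertical map outside the $G_c$-column is an inclusion inherited from $\widetilde{e} \leq e$. I do not anticipate any real obstacle: the argument uses neither exactness of the complexes nor anything about $\C$ beyond thickness, relying only on the pullback recipe for Yoneda composition and the universal property of pullbacks.
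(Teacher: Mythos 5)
Your proposal is correct and follows essentially the same route as the paper: identify the transition map to the index $\widetilde{M}$ with the Yoneda composite $(\widetilde{M}\hookrightarrow M)\,e$, realized by the pullback $G_c' = G_c\times_M \widetilde{M}$, and then use the universal property of that pullback to produce the map $\widetilde{G}_c\to G_c'$ completing a chain morphism $\widetilde{e}\to(\widetilde{M}\hookrightarrow M)\,e$ that is the identity on $\widetilde{M}$ and $N$. The only point worth stating explicitly is that $\widetilde{M}$ is an admissible index, i.e.\ $M/\widetilde{M}\in\C$, which is exactly what Proposition~\ref{prop:Cfree} guarantees.
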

\begin{proof}
  The cocycle $e \in \Ext^c_\A(M,N)$ is identical to the \nameft{Yoneda} product
  \[
    e' := (\widetilde{M} \hookrightarrow M) e:\ 0 \xleftarrow{} \widetilde{M} \xleftarrow{} G_c' \xleftarrow{} G_{c-1} \xleftarrow{} \cdots \xleftarrow{} G_1 \xleftarrow{} N \xleftarrow{} 0\in \Ext^c_\A(\widetilde{M},N),
  \]
  in the colimit $\varinjlim_{\substack{ M' \leq M,\\ M/M' \in \C}} \Ext^c_\A(M',N)$.
\begin{center}
\begin{tikzpicture}
  \coordinate (r) at (1.3,0);
  \coordinate (d) at (0,-1.3);
  
  \node (e) {$e:$};
  \node (0e) at ($(e)+0.5*(r)$) {$0$};
  \node (M) at ($(0e)+(r)$) {$M$};
  \node (Gc) at ($(M)+(r)$) {$G_c$};
  \node (Gc_1) at ($(Gc)+(r)$) {$G_{c-1}$};
  \node (edots) at ($(Gc_1)+(r)$) {$\cdots$};
  \node (G2) at ($(edots)+(r)$) {$G_2$};
  \node (G1) at ($(G2)+(r)$) {$G_1$};
  \node (N) at ($(G1)+(r)$) {$N$};
  \node (e0) at ($(N)+(r)$) {$0$};
  
  \draw[-stealth'] (M) -- (0e);
  \draw[-stealth'] (Gc) -- (M);
  \draw[-stealth'] (Gc_1) -- (Gc);
  \draw[-stealth'] (edots) -- (Gc_1);
  \draw[-stealth'] (G2) -- (edots);
  \draw[-stealth'] (G1) -- (G2);
  \draw[-stealth'] (N) -- (G1);
  \draw[-stealth'] (e0) -- (N);
  
  \node (e') at ($(e)+(d)$) {$e':$};
  \node (0e') at ($(e')+0.5*(r)$) {$0$};
  \node (Mt) at ($(0e')+(r)$) {$\widetilde{M}$};
  \node (Gc') at ($(Mt)+(r)$) {$G_c'$};
  \node (Gc_1') at ($(Gc')+(r)$) {$G_{c-1}$};
  \node (edots') at ($(Gc_1')+(r)$) {$\cdots$};
  \node (G2') at ($(edots')+(r)$) {$G_2$};
  \node (G1') at ($(G2')+(r)$) {$G_1$};
  \node (N') at ($(G1')+(r)$) {$N$};
  \node (e'0) at ($(N')+(r)$) {$0$};
  
  \draw[-stealth'] (Mt) -- (0e');
  \draw[-stealth'] (Gc') -- (Mt);
  \draw[-stealth'] (Gc_1') -- (Gc');
  \draw[-stealth'] (edots') -- (Gc_1');
  \draw[-stealth'] (G2') -- (edots');
  \draw[-stealth'] (G1') -- (G2');
  \draw[-stealth'] (N') -- (G1');
  \draw[-stealth'] (e'0) -- (N');
  
  \draw[right hook-stealth'] (Mt) -- (M);
  \draw[right hook-stealth'] (Gc') -- (Gc);
  \draw[double distance=0.2em] (Gc_1') -- (Gc_1);
  \draw[double distance=0.2em] (G2') -- (G2);
  \draw[double distance=0.2em] (G1') -- (G1);
  \draw[double distance=0.2em] (N') -- (N);
  
  \node (et) at ($(e')+(d)$) {$\widetilde{e}:$};
  \node (0et) at ($(et)+0.5*(r)$) {$0$};
  \node (Mtt) at ($(0et)+(r)$) {$\widetilde{M}$};
  \node (Gct) at ($(Mtt)+(r)$) {$\widetilde{G}_c$};
  \node (Gc_1t) at ($(Gct)+(r)$) {$\widetilde{G}_{c-1}$};
  \node (edotst) at ($(Gc_1t)+(r)$) {$\cdots$};
  \node (G2t) at ($(edotst)+(r)$) {$\widetilde{G}_2$};
  \node (G1t) at ($(G2t)+(r)$) {$\widetilde{G}_1$};
  \node (Nt) at ($(G1t)+(r)$) {$N$};
  \node (et0) at ($(Nt)+(r)$) {$0$};
  
  \draw[-stealth'] (Mtt) -- (0et);
  \draw[-stealth'] (Gct) -- (Mtt);
  \draw[-stealth'] (Gc_1t) -- (Gct);
  \draw[-stealth'] (edotst) -- (Gc_1t);
  \draw[-stealth'] (G2t) -- (edotst);
  \draw[-stealth'] (G1t) -- (G2t);
  \draw[-stealth'] (Nt) -- (G1t);
  \draw[-stealth'] (et0) -- (Nt);
  
  \draw[double distance=0.2em] (Mtt) -- (Mt);
  \draw[right hook-stealth'] (Gct) -- (Gc');
  \draw[right hook-stealth'] (Gc_1t) -- (Gc_1');
  \draw[right hook-stealth'] (G2t) -- (G2');
  \draw[right hook-stealth'] (G1t) -- (G1');
  \draw[double distance=0.2em] (Nt) -- (N');
  
  \draw[right hook-stealth'] (Gct) to [bend right=35] (Gc);
  \draw[-doublestealth] (Gct) to [bend right=15] (Mt);
  
\end{tikzpicture}
\end{center}
  By definition $\widetilde{M} \twoheadleftarrow G_c' \hookrightarrow G_c$ is the pullback of $\widetilde{M} \hookrightarrow M \twoheadleftarrow G_c$.
  The two morphisms $\widetilde{M} \twoheadleftarrow \widetilde{G}_c \hookrightarrow G_c$ form a commutative square with $\widetilde{M} \hookrightarrow M \twoheadleftarrow G_c$.
  The universal property of the pullback yields a mono $\widetilde{G}_c \hookrightarrow G_c'$ and we get an embedding of $\widetilde{e}$ in $e'$ yielding the same $c$-cocycle in $\Ext^c_\A(\widetilde{M},N)$.
\end{proof}

For the induction proof of Proposition~\ref{prop:Cfree} we need the next lemma, which shows how to replace short exact sequences in $\A$ by short exact sequences of $\C$-torsion-free objects.

\begin{lemm} \label{lemm:Cfree}
  Let $\C$ be an almost split thick subcategory of the \nameft{Abel}ian category $\A$ and
  $e:\  0 \xleftarrow{} M \xleftarrow{} G \xleftarrow{} L \xleftarrow{} 0$ a short exact $\A$-sequence with $\C$-torsion-free $M$.
  Then there exists a short exact $\A$-subsequence $e':\  0 \xleftarrow{} M' \xleftarrow{} G' \xleftarrow{} L' \xleftarrow{} 0$ with $\C$-torsion-free objects which equals $e$ up to $\C$-factors. \label{lemm:Cfree.a}
\end{lemm}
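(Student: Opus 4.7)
The plan is to apply almost splitness to the middle object $G$ and let the outer terms of $e'$ be determined canonically by that choice. Concretely, using that $\C$ is almost split I would first pick a $\C$-torsion-free subobject $G' := G^\perp \leq G$ with $G/G' \in \C$. I would then set $L' := L \cap G'$, i.e., the kernel of the composite $G' \hookrightarrow G \twoheadrightarrow M$, and $M' := \img_\A(G' \hookrightarrow G \twoheadrightarrow M)$. By construction the three inclusions $L' \hookrightarrow L$, $G' \hookrightarrow G$, $M' \hookrightarrow M$ assemble into a morphism of complexes $e' \hookrightarrow e$.

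Next I would verify that $e': 0 \leftarrow M' \leftarrow G' \leftarrow L' \leftarrow 0$ is itself short exact in $\A$. The mono $L' \hookrightarrow G'$ and the epi $G' \twoheadrightarrow M'$ are immediate from the definitions, and exactness in the middle follows from $\ker(G' \to M) = G' \cap L = L'$. For $\C$-torsion-freeness: $G'$ is $\C$-torsion-free by choice, $L'$ as a subobject of $G'$, and $M'$ as a subobject of the $\C$-torsion-free $M$.

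Finally I would check that $e/e'$ lies componentwise in $\C$. The middle term is immediate: $G/G' \in \C$ by the almost-complement property. For the outer terms I would invoke the second isomorphism theorem: $L/L' = L/(L \cap G') \cong (L+G')/G' \leq G/G'$ is in $\C$ by thickness under subobjects, and $M/M' \cong G/(G'+L)$ is a quotient of $G/G' \in \C$, hence again in $\C$ by thickness under factor objects.

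I do not expect a real obstacle, since the statement is essentially a translation of almost splitness from objects to three-term exact sequences. The only nontrivial decision is to apply almost splitness to $G$ rather than to $L$. Doing so pays off twice: a single quotient $G/G' \in \C$ controls all three $\C$-defects of $e/e'$ via thickness, and the assumption that $M$ is already $\C$-torsion-free can be used \emph{for free} to conclude $\C$-torsion-freeness of $M'$ without any further construction on the right end of the sequence.
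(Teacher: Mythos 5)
Your proposal is correct and is essentially the paper's own argument: apply almost splitness to the middle term $G$, set $L' = L \cap G'$ and $M' = \img(G' \to M) \cong (G'+L)/L$, and control all three $\C$-factors by the single quotient $G/G' \in \C$ via the second isomorphism theorem. The only cosmetic difference is that the paper defines $M'$ as $G'/L'$ and then identifies it with the subobject $(G'+L)/L$ of $M$, which you take as the definition directly.
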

\begin{wrapfigure}[7]{r}{3.2cm}
\vspace{-1em}
  \centering
 \begin{tikzpicture}[C/.style={color=red,line width=1.5pt,dotted},new/.style={color=blue,line width=2pt},map/.style={color=gray,dashed}]
    
  \newcommand{\circlewidth}{1.5pt}
  
  \coordinate (a) at (0,1);
  \coordinate (c) at (-0.5,0.3);
  \coordinate (m) at (-4,0);
  
  \coordinate (0_G);
  \coordinate (imL1) at ($(0_G)+(a)$);
  \coordinate (G1) at ($(imL1)+(a)$);
  \coordinate (imL) at ($(imL1)+(c)$);
  \coordinate (imL_plus_G1) at ($(G1)+(c)$);
  \coordinate (G) at ($(imL_plus_G1)+(c)$);
  
  \node at (G) [above right] {$G$};
  \node at (G1) [above right] {$G'$};
  
  \node at (imL) [below left] {$L$};
  \node at ($0.8*(imL1)+0.2*(0_G)$) [right] {$L'$};
  
  \draw[decorate,decoration=brace] ($(G1)+(0.2,0)$) -- node[right] {$M'$} ($(imL1)+(0.2,0)$);
  \draw[decorate,decoration=brace] ($(imL)-(0.9,0)$) -- node[left] {$M$} ($(G)-(0.4,0)$);
  \draw[decorate,decoration=brace] ($(imL)-(0.1,0)$) -- node[left] {$M''$} ($(imL_plus_G1)-(0.1,0)$);
  
  \draw[new] (0_G) -- (G1);
  \draw[new] (imL) -- (imL_plus_G1);
  \draw[C] (G1) -- (G);
  \draw[C] (imL1) -- (imL);
  
  \fill (0_G) circle (\circlewidth);
  \fill (imL1) circle (\circlewidth);
  \fill (G1) circle (\circlewidth);
  \fill (imL) circle (\circlewidth);
  \fill (imL_plus_G1) circle (\circlewidth);
  \fill (G) circle (\circlewidth);
   
 \end{tikzpicture}
\end{wrapfigure}
\mbox{}
\vspace{-2em}
\begin{proof}
We interpret $L$ as a subobject of $G$ with factor object $M$.
Let $G'$ be an almost $\C$-complement in $G$.
Hence $G/G'$ lies in $\C$.
Now define $L'=L \cap G'$ and $M' := G'/L'$.
The object $L'$ is $\C$-torsion-free as a subobject of $G'$ and $M'$ is $\C$-torsion-free since it is isomorphic to the subobject $M''=(G'+L)/L$ of $M=G/L$.
Finally $M/M''$ lies in $\C$ as a factor of $G/G'$ and $L/L'$ lies in $\C$ since it is isomorphic to the subobject $(G'+L)/G'$ of $G/G'$.
\end{proof}

\begin{proof}[Proof of Proposition~\ref{prop:Cfree}]
  We will construct $\widetilde{e}$ by induction on $c$.
  For $c=1$ take $\widetilde{e} = e$ by Lemma~\ref{lemm:extension_of_C_free_is_C_free}.
  Now assume the statement is true for $c-1$ (i.e., for complexes of length $c+1$).
  Define $L := \img(G_c \xleftarrow{} G_{c-1})$.
  Write $e$ as the \nameft{Yoneda} product (i.e., concatenation) $e_1 e_2$ of the short exact $\A$-sequence  $e_1:\  0 \xleftarrow{} M \xleftarrow{} G_c \xleftarrow{} L \xleftarrow{} 0$ and the exact $\A$-complex $e_2:\  0 \xleftarrow{} L \xleftarrow{} G_{c-1} \xleftarrow{} \cdots \xleftarrow{} G_1 \xleftarrow{} N \xleftarrow{} 0$.
  First apply Lemma~\ref{lemm:Cfree} to $e_1$ and obtain the short exact $\C$-torsion-free $\A$-sequence $e_1': 0 \xleftarrow{} M' \xleftarrow{} G_c' \xleftarrow{} L' \xleftarrow{} 0$.
  Now define the exact $\A$-subcomplex
  \[
    e_2':=(L'\hookrightarrow L)e_2:\  0 \xleftarrow{} L' \xleftarrow{} G'_{c-1} \xleftarrow{} G_{c-2} \xleftarrow{} \cdots \xleftarrow{} G_1 \xleftarrow{} N \xleftarrow{} 0
  \]
  of $e_2$.
  By the induction hypothesis there exists an exact $\C$-torsion-free $\A$-subcomplex $\widetilde{e}_2:\  0 \xleftarrow{} \widetilde{L} \xleftarrow{} \widetilde{G}_{c-1} \xleftarrow{} \widetilde{G}_{c-2} \xleftarrow{} \cdots \xleftarrow{} \widetilde{G}_1 \xleftarrow{} N \xleftarrow{} 0$ of $e_2'$.
  Replacing $L'$ by its subobject $\widetilde{L}$ in $e_1'$ we obtain the $\C$-torsion-free $\A$-complex $\widetilde{e}_1': \  0 \xleftarrow{} M' \xleftarrow{} G_c' \xleftarrow{} \widetilde{L} \xleftarrow{} 0$ which is exact up to a $\C$-defect.
  Now apply Lemma~\ref{lemm:Cfree_exact} to $\widetilde{e}_1'$ and obtain the short exact $\C$-torsion-free $\A$-sequence $\widetilde{e}_1:\  0 \xleftarrow{} \widetilde{M} \xleftarrow{} \widetilde{G}_c \xleftarrow{} \widetilde{L} \xleftarrow{} 0$.
  Finally define $\widetilde{e} := \widetilde{e}_1 \widetilde{e}_2$.
  
  All constructions in this proof yield subcomplexes equal to their super-complexes up to $\C$-factors.
  Thus, we conclude the $\widetilde{e}$ equals $e$ up to $\C$-factors.
\end{proof}

By Remark~\ref{rmrk:saturated_form} the following theorem is the equivalent ``saturated form'' of Theorem~\ref{thm:main}.
\begin{theorem} \label{thm:iso}
  If $\C$ is an almost split localizing subcategory of the \nameft{Abel}ian category $\A$ then
  \[
    \widehat{\QQ}^{\Ext}: \varinjlim_{\substack{ M' \leq M,\\ M/M' \in \C}} \Ext^c_\A(M',N) \to \Ext^c_{\Sat_\C(\A)}(M,N)
  \]
  is an isomorphism (of \nameft{Abel}ian groups) for all $\C$-saturated $M,N \in \A$.
\end{theorem}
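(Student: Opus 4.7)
The surjectivity is given by Proposition~\ref{prop:surj}, so the remaining content is injectivity. I propose to argue by induction on $c$, with the base case $c=1$ furnished by Theorem~\ref{thm:ext1} (and $c=0$ by equation~(\ref{eq:Hom})). For the inductive step with $c\ge 2$, take $e\in\Ext^c_\A(M',N)$ with $M/M'\in\C$ representing a class for which $\widehat{\QQ}^{\Ext}(e)=0$ in $\Ext^c_{\Sat_\C(\A)}(M,N)$. My first step is to replace $e$ by an almost $\C$-complement, using Proposition~\ref{prop:Cfree} together with the lemma immediately following it; this replacement represents the same colimit class, so without loss of generality every object of $e$ is $\C$-torsion-free.

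Next, I splice $e$ as a Yoneda composite $e=e_2\cdot e_1$, where $e_1\colon 0\leftarrow K\leftarrow G_1\leftarrow N\leftarrow 0$ is the short exact sequence obtained by taking $K\le G_2$ to be the image of the arrow from $G_1$ into $G_2$ appearing in $e$ (so $K$ is $\C$-torsion-free as a subobject of $G_2$), and $e_2\in\Ext^{c-1}_\A(M',K)$ is the remaining $\C$-torsion-free exact $(c-1)$-extension. Applying the exact functor $\widehat{\QQ}$ and using the long exact $\Ext$-sequence in $\Sat_\C(\A)$ induced by $\widehat{\QQ}(e_1)$, the hypothesis $\widehat{\QQ}(e_2)\cdot\widehat{\QQ}(e_1)=\widehat{\QQ}(e)=0$ places $\widehat{\QQ}(e_2)$ in the image of the pushforward $\alpha_*\colon\Ext^{c-1}_{\Sat_\C(\A)}(M,\widehat{\QQ}(G_1))\to\Ext^{c-1}_{\Sat_\C(\A)}(M,\widehat{\QQ}(K))$. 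Combining Proposition~\ref{prop:surj} at stage $c-1$ with the inductive injectivity hypothesis at stage $c-1$ (applied with the $\C$-saturated targets $\widehat{\QQ}(G_1)$ and $\widehat{\QQ}(K)$), one lifts this relation to $\A$: after shrinking $M'$ to some $M''\le M'$ with $M'/M''\in\C$, there is a cocycle $f\in\Ext^{c-1}_\A(M'',G_1)$ satisfying $\alpha_*(f)=(M''\hookrightarrow M')\cdot e_2$ in $\Ext^{c-1}_\A(M'',K)$. The long exact $\Ext$-sequence in $\A$ attached to $e_1$ then forces $(M''\hookrightarrow M')\cdot e=\alpha_*(f)\cdot e_1=0$ in $\Ext^c_\A(M'',N)$, whence the class of $e$ vanishes in the colimit.

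The main technical obstacle is this lifting step: the inductive hypothesis applies to $\C$-saturated second arguments only, while $G_1$ and $K$ are in general merely $\C$-torsion-free. The remedy is to apply the hypothesis at the $\C$-saturations $(\SS\circ\QQ)(G_1)$ and $(\SS\circ\QQ)(K)$, then descend the resulting lift back to $G_1$ and $K$ via the short exact sequences $0\to G_1\to(\SS\circ\QQ)(G_1)\to C\to 0$ with $C\in\C$ (and analogously for $K$). This descent hinges on the vanishing of the colimit $\varinjlim_{M''}\Ext^{\ast}_\A(M'',C)$ over $M''\le M$ with $M/M''\in\C$ whenever $C\in\C$, which can be obtained from the colimit-quotient argument already used in Section~\ref{sec:Hom} for $c=0$ (every morphism into a $\C$-object has kernel of $\C$-cofinite inclusion, hence becomes zero after restriction) combined with Proposition~\ref{prop:Cfree} for the higher-degree case. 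Once this vanishing is granted, the remaining compatibility reduces to a formal diagram chase through the commutative square relating $\varinjlim\Ext_\A$ to $\Ext_{\Sat_\C(\A)}$ in the second argument.
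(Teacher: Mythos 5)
Your route is genuinely different from the paper's: the paper never inducts on $c$. Instead, given two cocycles with the same image, it passes via Proposition~\ref{prop:Cfree} to $\C$-torsion-free exact subcomplexes, observes that these embed as $\A$-subcomplexes of $\iota(\widehat{e})$, intersects them there, and applies Lemma~\ref{lemm:Cfree_exact} to the (possibly non-exact) intersection to produce one common exact subcomplex that represents both classes in the colimit. Your scheme (splice $e=e_2e_1$, use the long exact \nameft{Yoneda}-$\Ext$ sequences in $\Sat_\C(\A)$ and in $\A$ with connecting maps given by \nameft{Yoneda} product, lift through the degree-$(c-1)$ isomorphism, descend from the saturations of $G_1$ and $K$) is viable in principle, but it imports machinery the paper deliberately avoids: the long exact sequences for \nameft{Yoneda} $\Ext$ in both variables in an \nameft{Abel}ian category without enough projectives or injectives, and exactness of the filtered colimit; these are classical but must be invoked explicitly, and the compatibility of your degree-$(c-1)$ isomorphisms with $\alpha_*$ has to be checked (it does follow from the binaturality established in Section~\ref{sec:QExt}).

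The genuine gap is the vanishing statement on which your whole descent hinges: $\varinjlim_{M''\le M,\,M/M''\in\C}\Ext^c_\A(M'',C)=0$ for $C\in\C$ and $c\ge 1$ (you need it in degrees $c-1$ and $c-2$ to make the comparison maps for $G_1$ and $K$ bijective, not merely surjective). Your justification, the $c=0$ kernel argument ``combined with Proposition~\ref{prop:Cfree}'', does not work: Proposition~\ref{prop:Cfree} requires \emph{both} end terms of the complex to be $\C$-torsion-free, whereas here the second argument lies in $\C$, so it simply does not apply. The statement is true under the almost-split hypothesis, but it needs its own proof, for instance: for $c=1$, given $0\to C\to G\to M''\to 0$, an almost $\C$-complement $G^\perp\le G$ satisfies $G^\perp\cap C=0$, so the extension splits after restriction to $M''':=\img(G^\perp\to M'')$, and $M''/M'''\in\C$; for $c\ge 2$, splice off $0\to C\to G_1\to K\to 0$, choose $K'\le K$ with $K/K'\in\C$ killing this $1$-extension as above, and use the degree-$(c-1)$ case of the same vanishing (applied to $K/K'\in\C$) together with the long exact sequence to lift the remaining $(c-1)$-extension into $K'$ after shrinking $M''$; associativity of the \nameft{Yoneda} product then kills the class. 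Note that almost splitness is genuinely needed here (compare Example~\ref{exmp:non_almost_split}), so this lemma cannot be waved through. With this lemma supplied (and Theorem~\ref{thm:ext1} as base case, Proposition~\ref{prop:surj} for surjectivity), your induction does go through.
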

\begin{proof}
  Let $e' \in \Ext^c_\A(M',N)$ and $e'' \in \Ext^c_\A(M'',N)$ be two cocycles which map to the same element $\widehat{e}:=\widehat{Q}(e') = \widehat{Q}(e'') \in \Ext^c_{\Sat_\C(\A)}(M,N)$.
  By Proposition~\ref{prop:Cfree} we can pass in the colimit to $\C$-torsion-free representatives $\widetilde{e}' \in \Ext^c_\A(\widetilde{M'},N)$ and $\widetilde{e}'' \in \Ext^c_\A(\widetilde{M''},N)$, which are exact $\A$-subcomplexes of $e'$ and $e''$, respectively.
  Furthermore, $\widetilde{e}'$ is an $\A$-subcomplex of $\iota(\widehat{e})$, as it is $\C$-torsion-free and the kernel of $e'\xrightarrow{} \iota(\widehat{e})$ is $H_\C(e')$; the same holds for $\widetilde{e}''$.
  Taking the intersection of $\widetilde{e}'$ and $\widetilde{e}''$ as subcomplexes of $\iota(\widehat{e})$ we obtain an $\A$-subcomplex $\breve{e}$ of $\iota(\widehat{e})$, which is not necessarily exact.
  Lemma~\ref{lemm:Cfree_exact} yields an exact $\A$-subcomplex $e^\perp \leq \breve{e}$ which still represents the same cocycle as $\widetilde{e}'$ and $\widetilde{e}''$ and hence $e'$ and $e''$ in the colimit, and thus all these cocycles are equal in the colimit.
\end{proof}

\begin{appendix}

\section{Sketch of the proper constructive setup} \label{sec:computability}

We now roughly describe the constructive context of this paper.
A detailed description would require a more elaborate preparation and would distract from the main result of this paper, which in this form should already be self-contained.
The standard way to express mathematical notions constructively is to provide algorithms for all disjunctions and all existential quantifiers appearing in the defining axioms of a mathematical structure.
In the case of \nameft{Abel}ian categories this led us to the notion of a \textbf{computable \nameft{Abel}ian} or \textbf{constructively \nameft{Abel}ian category} \cite{BL}.
Given that, all constructions which only depend on a category being \nameft{Abel}ian become computable\footnote{A constructive treatment of spectral sequences along these lines can be found in \cite{BaSF} with a computer implementation in \cite{homalg-package}.}.
The computability of $\A$ implies, in particular, that we can compute in its $\Hom$-groups only \emph{locally}, i.e., we can decide element membership in the $\Hom$-sets, whether morphisms are zero, add and subtract morphisms, and hence decide the equality of two morphisms.
This does not imply that we can ``oversee'' a $\Hom$-group in any way, not even being able to decide its triviality (see $\Hom$-computability below).

For an \nameft{Abel}ian category $\A$ with thick subcategory $\C\subset\A$ we prove in \cite{BL_GabrielMorphisms} that $\A/\C$ is computable once the \nameft{Abel}ian category $\A$ is computable and the membership in $\C \subset \A$ is constructively decidable.

We call $\C \subset \A$ \textbf{constructively localizing} if there exists algorithms to compute the \nameft{Gabriel} monad\footnote{as we call it in \cite{BL_Monads}.} $\SS \circ \QQ$ together with its unit.
Formula \eqref{eq:Hom} in Section~\ref{sec:Hom} proves that if $\A$ is $\Hom$-computable and $\C \subset \A$ is constructively localizing then $\A/\C$ is $\Hom$-computable, where \textbf{$\Hom$-computability} means the computability as an enriched\footnote{Enriched categories are usually required to be small.
In an algorithmic setting any category is small, as the possible states of the computer memory is a set.} category over a \emph{computable} monoidal category\footnote{... of \nameft{Abel}ian groups, $k$-vector spaces, etc., depending on the context.}.

Theorem~\ref{thm:main} implies that $\A/\C$ is $\Ext$-computable if $\A$ is $\Ext$-computable and $\C \subset \A$ is almost split localizing and constructively localizing and the direct limit is constructive.
We would define \textbf{$\Ext$-computability} to be the $\Hom$-computability of the derived category of $\A$.
This would lead too far away.

Finally, we note that the entire proof of Theorem~\ref{thm:main} is constructive and suited for computer implementation.
So if we assume that $\A$ is computable and $\C \subset \A$ is constructively almost split localizing\footnote{I.e., constructively localizing and that we can algorithmically construct the maximal almost $\C$-complement of objects in $\A$} then the proof of Theorem~\ref{thm:main} provides an algorithm to compute images and preimages of elements represented as \nameft{Yoneda} cocycles under $\widehat{\QQ}^{\Ext}: \varinjlim_{\substack{ M' \leq M,\\ M/M' \in \C}} \Ext^c_\A(M',N) \to \Ext^c_{\Sat_\C(\A)}(M,N)$.
Furthermore, if $\A$ is $\Hom$-computable and has constructively enough projectives or injectives then we can decide equality of (\nameft{Yoneda}) cocycles (cf.~\cite[Appendix~B]{BB}).

\end{appendix}

\section*{Acknowledgments}
We thank \nameft{Markus Perling} and \nameft{Greg Smith} for discussions on the range of applicability of Theorem~\ref{thm:main} to coherent sheaves.
We are indebted to anonymous referee who spotted a serious issue in our first version of the injectivity proof.
Addressing it helped us to correct, streamline, and simplify the whole argument.

\def\cprime{$'$} \def\cprime{$'$} \def\cprime{$'$} \def\cprime{$'$}
  \def\cprime{$'$}
\providecommand{\bysame}{\leavevmode\hbox to3em{\hrulefill}\thinspace}
\providecommand{\MR}{\relax\ifhmode\unskip\space\fi MR }
\providecommand{\MRhref}[2]{%
  \href{http://www.ams.org/mathscinet-getitem?mr=#1}{#2}
}
\providecommand{\href}[2]{#2}


\begin{thebibliography}{BLH14b}

\bibitem[Bar09]{BaSF}
Mohamed Barakat, \emph{Spectral filtrations via generalized morphisms},
  submitted (\href{http://arxiv.org/abs/0904.0240}{\texttt{arXiv:0904.0240}})
  (v2 in preparation), 2009.

\bibitem[BB08]{BB}
Mohamed Barakat and Barbara Bremer, \emph{Higher extension modules and the
  {Y}oneda product}, submitted
  (\href{http://arxiv.org/abs/0802.3179}{\texttt{arXiv:0802.3179}}), 2008.

\bibitem[Be{\u\i}78]{Bei78}
A.~A. Be{\u\i}linson, \emph{Coherent sheaves on {${\bf P}^{n}$} and problems in
  linear algebra}, Funktsional. Anal. i Prilozhen. \textbf{12} (1978), no.~3,
  68--69. \MR{509388 (80c:14010b)}

\bibitem[BLH11]{BL}
Mohamed Barakat and Markus Lange-Hegermann, \emph{An axiomatic setup for
  algorithmic homological algebra and an alternative approach to localization},
  J. Algebra Appl. \textbf{10} (2011), no.~2, 269--293,
  (\href{http://arxiv.org/abs/1003.1943}{\texttt{arXiv:1003.1943}}).
  \MR{2795737 (2012f:18022)}

\bibitem[BLH13]{BL_Monads}
Mohamed Barakat and Markus Lange-Hegermann, \emph{On monads of exact reflective
  localizations of {A}belian categories}, Homology Homotopy Appl. \textbf{15}
  (2013), no.~2, 145--151,
  (\href{http://arxiv.org/abs/1202.3337}{\texttt{arXiv:1202.3337}}).
  \MR{3138372}

\bibitem[BLH14a]{homalg-package}
Mohamed Barakat and Markus Lange-Hegermann, \emph{The $\mathtt{homalg}$ package
  -- {A} homological algebra $\mathsf{GAP4}$ meta-package for computable
  {A}belian categories}, 2007--2014,
  (\url{http://homalg.math.rwth-aachen.de/index.php/core-packages/homalg-package}).

\bibitem[BLH14b]{BL_SerreQuotients}
Mohamed Barakat and Markus Lange-Hegermann, \emph{Characterizing {S}erre
  quotients with no section functor and applications to coherent sheaves},
  Appl. Categ. Structures \textbf{22} (2014), no.~3, 457--466,
  (\href{http://arxiv.org/abs/1210.1425}{\texttt{arXiv:1210.1425}}).
  \MR{3200455}

\bibitem[BLH14c]{BL_GabrielMorphisms}
Mohamed Barakat and Markus Lange-Hegermann, \emph{{G}abriel morphisms and the
  computability of {S}erre quotients with applications to coherent sheaves},
  (\href{http://arxiv.org/abs/1409.2028}{\texttt{arXiv:1409.2028}}), 2014.

\bibitem[BS98]{BrSh}
M.~P. Brodmann and R.~Y. Sharp, \emph{Local cohomology: an algebraic
  introduction with geometric applications}, Cambridge Studies in Advanced
  Mathematics, vol.~60, Cambridge University Press, Cambridge, 1998.
  \MR{1613627 (99h:13020)}

\bibitem[CLS11]{CLS11}
David~A. Cox, John~B. Little, and Henry~K. Schenck, \emph{Toric varieties},
  Graduate Studies in Mathematics, vol. 124, American Mathematical Society,
  Providence, RI, 2011. \MR{2810322 (2012g:14094)}

\bibitem[DE02]{DE}
Wolfram Decker and David Eisenbud, \emph{Sheaf algorithms using the exterior
  algebra}, Computations in algebraic geometry with Macaulay 2, Algorithms
  Comput. Math., vol.~8, Springer, Berlin, 2002, pp.~215--249. \MR{MR1949553}

\bibitem[EFS03]{EFS}
David Eisenbud, Gunnar Fl{\o}ystad, and Frank-Olaf Schreyer, \emph{Sheaf
  cohomology and free resolutions over exterior algebras}, Trans. Amer. Math.
  Soc. \textbf{355} (2003), no.~11, 4397--4426 (electronic). \MR{MR1990756
  (2004f:14031)}

\bibitem[Gab62]{Gab_thesis}
Pierre Gabriel, \emph{Des cat\'egories ab\'eliennes}, Bull. Soc. Math. France
  \textbf{90} (1962), 323--448. \MR{0232821 (38 \#1144)}

\bibitem[HS97]{HS}
P.~J. Hilton and U.~Stammbach, \emph{A course in homological algebra}, second
  ed., Graduate Texts in Mathematics, vol.~4, Springer-Verlag, New York, 1997.
  \MR{MR1438546 (97k:18001)}

\bibitem[HSS06]{HSS}
Milena Hering, Hal Schenck, and Gregory~G. Smith, \emph{Syzygies, multigraded
  regularity and toric varieties}, Compos. Math. \textbf{142} (2006), no.~6,
  1499--1506. \MR{2278757 (2007k:13025)}

\bibitem[ML95]{ML95}
Saunders Mac~Lane, \emph{Homology}, Classics in Mathematics, Springer-Verlag,
  Berlin, 1995, Reprint of the 1975 edition. \MR{MR1344215 (96d:18001)}

\bibitem[MS04]{MS04}
Diane Maclagan and Gregory~G. Smith, \emph{Multigraded {C}astelnuovo-{M}umford
  regularity}, J. Reine Angew. Math. \textbf{571} (2004), 179--212. \MR{2070149
  (2005g:13027)}

\bibitem[MS05]{MS05}
Diane Maclagan and Gregory~G. Smith, \emph{Uniform bounds on multigraded
  regularity}, J. Algebraic Geom. \textbf{14} (2005), no.~1, 137--164.
  \MR{2092129 (2005g:14098)}

\bibitem[Oor64]{Oort_Yoneda64}
F.~Oort, \emph{Yoneda extensions in abelian categories}, Math. Ann.
  \textbf{153} (1964), 227--235. \MR{0162836 (29 \#140)}

\bibitem[Per14]{PerLift}
Markus Perling, \emph{A lifting functor for toric sheaves}, Tohoku Math. J. (2)
  \textbf{66} (2014), no.~1, 77--92,
  (\href{http://arxiv.org/abs/1110.0323}{\texttt{arXiv:1110.0323}}).
  \MR{3189480}

\bibitem[Smi00]{SmithExt}
Gregory~G. Smith, \emph{Computing global extension modules}, J. Symbolic
  Comput. \textbf{29} (2000), no.~4-5, 729--746, Symbolic computation in
  algebra, analysis, and geometry (Berkeley, CA, 1998). \MR{1769664
  (2001h:14013)}

\bibitem[Smi13]{SmithExtOW}
Gregory~G. Smith, \emph{Computing global extension modules}, Mini-Workshop:
  Constructive homological algebra with applications to coherent sheaves and
  control theory, Oberwolfach Reports, no.~25, MFO, Oberwolfach, 2013,
  (\url{http://www.mfo.de/document/1320b/OWR_2013_25.pdf}).

\end{thebibliography}
\end{document}
